\def\arxivmode{}

\ifdefined\arxivmode
\documentclass[11pt]{article}
\usepackage[margin=1in]{geometry}
\usepackage{amsmath,amssymb,amsfonts,graphicx,color,array}
\usepackage{amsthm}
\usepackage{tgtermes}
\usepackage{newtxtext}
\usepackage{newtxmath}
\usepackage{bm}
\usepackage{endnotes}

\newcommand{\OneAndAHalfSpacedXII}{}

\newcommand{\EquationsNumberedThrough}{}

\newcommand{\TheoremsNumberedThrough}{}

\newcommand{\ECRepeatTheorems}{}
\newcommand{\RUNAUTHOR}[1]{}
\newcommand{\RUNTITLE}[1]{}
\newcommand{\MANUSCRIPTNO}[1]{}
\newcommand{\HISTORY}[1]{}

\makeatletter
\newcommand{\arxiv@title}{}
\newcommand{\TITLE}[1]{\renewcommand{\arxiv@title}{#1}}
\newcommand{\arxiv@authors}{}
\newcommand{\ARTICLEAUTHORS}[1]{\renewcommand{\arxiv@authors}{#1}}
\newcommand{\AUTHOR}[1]{\par\medskip\textbf{#1}}
\newcommand{\AFF}[1]{\par{\small\itshape #1}}
\newcommand{\EMAIL}[1]{\texttt{#1}}

\newcommand{\arxiv@abstract}{}
\newcommand{\ABSTRACT}[1]{\renewcommand{\arxiv@abstract}{#1}}
\newcommand{\arxiv@keywords}{}
\newcommand{\KEYWORDS}[1]{\renewcommand{\arxiv@keywords}{#1}}
\newcommand{\arxiv@funding}{}
\newcommand{\FUNDING}[1]{\renewcommand{\arxiv@funding}{#1}}
\renewcommand{\maketitle}{%
  \begin{center}
    {\LARGE\bfseries\arxiv@title\par}\vspace{1em}
    \arxiv@authors
  \end{center}\vspace{1em}
  \begin{abstract}\arxiv@abstract\end{abstract}
  \noindent\textbf{Keywords:} \arxiv@keywords\par\medskip
  \noindent\textbf{Funding:} \arxiv@funding\par\bigskip}
\makeatother

\renewenvironment{proof}[1]{\par\noindent\textit{#1}\enskip\ignorespaces}{\par\addvspace{0pt}}
\newcommand{\Halmos}{\mbox{\quad$\square$}}

\long\def\FIGURE#1#2#3{\centering #1\caption{#2}}
\long\def\TABLE#1#2#3{\caption{#1}\centering #2}

\newcommand{\SingleSpacedXI}{}
\newcommand{\SingleSpacedXII}{}

\def\argmin{\mathop{\rm arg\,min}}

\newenvironment{APPENDICES}{\appendix}{}

\else
\documentclass[ijoc,sglanonrev]{informs4}
\RequirePackage{tgtermes}
\RequirePackage{newtxtext}
\RequirePackage{newtxmath}
\RequirePackage{bm}
\RequirePackage{endnotes}

\OneAndAHalfSpacedXII % Current default line spacing
\fi

\usepackage{mathtools}
\usepackage{tabularx}
\usepackage{lscape}
\usepackage{cases}
\usepackage{booktabs}
\usepackage{multirow}
\usepackage{colortbl,hhline}
\usepackage{tikz}
\usepackage{setspace}
\usepackage[caption=false]{subfig}
\usetikzlibrary{calc,arrows.meta,bending, positioning,decorations.pathreplacing,calligraphy,fit,patterns.meta}
\pgfmathdeclarerandomlist{RandBlackWhite}{%
	{black}%
		{white}%
}
\pgfmathsetseed{2}
\usepackage{cancel}
\usepackage{xcolor}
\usepackage[normalem]{ulem}
\usepackage[ruled, vlined,linesnumbered]{algorithm2e}
\usepackage{enumerate}
\usepackage{url}
\usepackage{natbib}
 \bibpunct[, ]{(}{)}{,}{a}{}{,}%
 \def\BIBand{and}%

 \mathchardef\mhyphen="2D

\newcommand{\Tmax}{T_{\max}}
\newcommand{\RangeOper}[1]{\llbracket #1 \rrbracket}
\newcommand{\Trange}{\RangeOper{T}}
\newcommand{\Tmaxrange}{\RangeOper{\Tmax}}
\newcommand{\B}{\mathbb{B}}

\newcommand{\vbd}{\mathbf{d}}
\newcommand{\vbx}{\mathbf{x}}
\newcommand{\vby}{\mathbf{y}}
\newcommand{\vbz}{\mathbf{z}}
\newcommand{\vbw}{\mathbf{w}}
\newcommand{\vbu}{\mathbf{u}}
\newcommand{\vbh}{\mathbf{h}}

\newcommand{\phiIndicator}{p}
\newcommand{\vbzero}{\mathbf{0}}
\newcommand{\vbone}{\mathbf{1}}

\newcommand{\ControlSet}{\mathcal{D}^{\mathrm{all}}}
\newcommand{\PrevControlSet}{\mathcal{D}}
\newcommand{\TargetSize}{\lambda}
\newcommand{\TargetSizeMax}{\lambda_\text{max}}
\newcommand{\GeneSet}{\mathcal{I}}
\newcommand{\CtrlGeneSet}{\mathcal{J}}
\newcommand{\UnctrlGeneSet}{\mathcal{J}^{\mathrm{c}}}

\newcommand{\ULP}{\mathcal{U}}
\newcommand{\LLP}{\mathcal{L}}
\newcommand{\SubProb}{\mathcal{S}}
\newcommand{\MP}{\mathcal{M}}
\newcommand{\CutSet}{\mathcal{V}}
\newcommand{\ULPModel}{\ULP(\TargetSize,\PrevControlSet)}
\newcommand{\SubspaceP}{\SubProb^*}
\newcommand{\SubproblemAt}[2]{\SubProb^{#2}(#1)}
\newcommand{\SubproblemModelAtT}{\SubproblemAt{\vbd}{T}}
\newcommand{\LLPModel}{\LLP^{\Tmax}(\vbd)}
\newcommand{\LLPModelAtT}[1]{\LLP^{#1}(\vbd)}
\newcommand{\SubspaceModel}{\SubspaceP(\vbd)}
\newcommand{\MPModel}{\MP(\TargetSize,\PrevControlSet,\CutSet)}

\newcommand{\UseTScut}{\texttt{UseTScut}}
\newcommand{\phigene}{\varphi}
\newcommand{\phiFunction}{\phi}

\usepackage{xr-hyper}
\usepackage{hyperref}
\usepackage[capitalise]{cleveref}
\crefname{figure}{Figure}{Figures}
\Crefname{figure}{Figure}{Figures}
\crefname{constraint}{Constraint}{Constraints}
\Crefname{constraint}{Constraint}{Constraints}
\creflabelformat{constraint}{(#2#1#3)}
\crefname{objective}{Objective}{Objectives}
\Crefname{objective}{Objective}{Objectives}
\crefname{appendix}{Appendix}{Appendices}
\Crefname{appendix}{Appendix}{Appendices}
\creflabelformat{objective}{(#2#1#3)}
\crefname{definition}{Definition}{Definitions}
\Crefname{definition}{Definition}{Definitions}
\crefname{equation}{Eq.}{Eqs.}
\Crefname{equation}{Eq.}{Eqs.}
\creflabelformat{equation}{(#2#1#3)}

\ifdefined\arxivmode
\theoremstyle{plain}
\newtheorem{theorem}{Theorem}

\theoremstyle{definition}
\newtheorem{definition}[theorem]{Definition}

\fi

\definecolor{msored}{HTML}{C00000}

\makeatletter
\newcommand*{\addFileDependency}[1]{% argument=file name and extension
  \typeout{(#1)}
  \@addtofilelist{#1}
  \IfFileExists{#1}{}{\typeout{No file #1.}}
}
\makeatother

\newcommand{\alphaj}{\alpha_j}
\newcommand{\betaj}{\beta_j}
\newcommand{\kj}{k_j}

\newcommand{\SACP}[1]{\textbf{SACP}(#1)}
\newcommand{\TmaxSACP}{\SACP{$\Tmax$}}
\newcommand{\InfiniteSACP}{\SACP{$\infty$}}

\newcommand{\numgenes}{|\GeneSet|}
\newcommand{\numctrlgenes}{|\CtrlGeneSet|}
\newcommand{\ClauseSet}{\mathfrak{C}}
\newcommand{\numclauses}{|\ClauseSet^1|}

\newcommand{\tscut}{TS cut}
\newcommand{\attrcut}{AT cut}

\newcommand{\SEP}{\texttt{SEP}}
\newcommand{\BEN}{\texttt{BEN}}
\newcommand{\PBN}{\texttt{PBN}}
\newcommand{\MibS}{\texttt{MibS}}

\newcolumntype{L}[1]{>{\raggedright\let\newline\\\arraybackslash\hspace{0pt}}m{#1}}
\newcolumntype{C}[1]{>{\centering\let\newline\\\arraybackslash\hspace{0pt}}m{#1}}
\newcolumntype{R}[1]{>{\raggedleft\let\newline\\\arraybackslash\hspace{0pt}}m{#1}}
\newcolumntype{E}{>{\raggedleft\let\newline\\\arraybackslash\hspace{0pt}}m{0cm}}

\definecolor{msoblue}{RGB}{0, 112, 192}

\NewDocumentCommand{\soutRobust}{m m}{
  {%
    \let\oldcite\cite
    \let\oldcitep\citep
	\let\oldcreff\cref
    \renewcommand{\cite}[1]{\mbox{\oldcite{##1}}}
    \renewcommand{\citep}[1]{\mbox{\oldcitep{##1}}}
	\renewcommand{\cref}[1]{\mbox{\oldcreff{##1}}}
    \textcolor{#2}{\sout{#1}}%
    \let\cite\oldcite
    \let\citep\oldcitep
	\let\cref\oldcreff
  }
}

\NewDocumentCommand{\kds}{m}{
  \soutRobust{#1}{blue}
}

\makeatletter
\def\AAnormalsizeXII{%
  \@setfontsize\normalsizeXI{12}{19.7}%
  \Dskips{1.0}{0.3}{0.5}{0.6}%
}
\allowdisplaybreaks[4]
\AtBeginDocument{\interdisplaylinepenalty=0}

\renewcommand{\kds}[1]{}
\EquationsNumberedThrough    % Default: (1), (2), ...
\TheoremsNumberedThrough     % Preferred (Theorem 1, Lemma 1, Theorem 2)
\ECRepeatTheorems  %  

\MANUSCRIPTNO{}

\begin{document}
%%%%%%%%%%%%%%%%

% Outcomment only when entries are known. Otherwise leave as is and
%   default values will be used.
%\setcounter{page}{1}
%\VOLUME{00}%
%\NO{0}%
%\MONTH{Xxxxx}% (month or a similar seasonal id)
%\YEAR{0000}% e.g., 2005
%\FIRSTPAGE{000}%
%\LASTPAGE{000}%
%\SHORTYEAR{00}% shortened year (two-digit)
%\ISSUE{0000} %
%\LONGFIRSTPAGE{0001} %
%\DOI{10.1287/xxxx.0000.0000}%

% Author's names for the running heads
% Sample depending on the number of authors;
% \RUNAUTHOR{Jones}
% \RUNAUTHOR{Jones and Wilson}
% \RUNAUTHOR{Jones, Miller, and Wilson}
% \RUNAUTHOR{Jones et al.} % for four or more authors
% Enter authors following the given pattern:
%\RUNAUTHOR{}
\RUNAUTHOR{Moon, Lee, and Paulevé}

% Title or shortened title suitable for running heads. Sample:
% \RUNTITLE{Predictive Maintenance in Manufacturing}
% Enter the (shortened) title:
\RUNTITLE{Bilevel IP for the synchronous attractor control problem}

% Full title. Sample:
% \TITLE{Optimal Resource Allocation in Humanitarian Logistics: A Stochastic Programming Approach}
% Enter the full title:
\TITLE{A Bilevel Integer Programming Approach for the Synchronous Attractor Control Problem}

% Block of authors and their affiliations starts here:
% NOTE: Authors with same affiliation, if the order of authors allows,
%   should be entered in ONE field, separated by a comma.
%   \EMAIL field can be repeated if more than one author
\ARTICLEAUTHORS{%
\AUTHOR{Kyungduk Moon, Kangbok Lee}
	\AFF{Department of Industrial and Management Engineering, POSTECH, South Korea\\ \EMAIL{kaleb.moon@postech.ac.kr}, \EMAIL{kblee@postech.ac.kr}} %, \URL{}
	\AUTHOR{Loïc Paulevé}
	\AFF{Univ. Bordeaux, CNRS, Bordeaux INP, LaBRI, UMR 5800 F-33400 Talence, France\\ \EMAIL{loic.pauleve@labri.fr}}
} % end of the block

\ABSTRACT{%
	Boolean networks are dynamical models of disease development in which the activation levels of genes are represented by binary variables. Given a Boolean network, controls represent mutations or medical treatments that fix the activation levels of selected genes so that all states in every attractor (i.e., long-term recurrent states) satisfy a desired phenotype. Our goal is to enumerate all minimal controls, identifying critical gene subsets in disease development and therapy. This problem has an inherent bilevel integer programming structure and is computationally challenging.

	We propose an infeasibility-based Benders decomposition, a logic-based Benders framework for bilevel integer programs with multiple subproblems. In our application, each subproblem finds a forbidden attractor of a given length and yields a problem-specific feasibility cut. We also propose an auxiliary IP called subspace separation that finds a Boolean subspace that includes multiple forbidden attractors and thereby strengthens the cut. Numerical experiments show that the resulting algorithms are much more scalable than state-of-the-art methods and that subspace separation substantially improves performance.
}%

\FUNDING{
	The work of KM and KL was supported under the framework of international cooperation program managed by National Research Foundation of Korea (No. RS-2023-00259481), and supported by Basic Science Research Program through the National Research Foundation of Korea funded by the Ministry of Education (No. RS-2025-25433718).
	Work of LP was supported by Campus France in the scope of the bilateral France-South Korea PHC STAR project number 50147NM and by the French Agence Nationale pour la Recherche (ANR) in the scope of the
	ANR-NRF project ``REPAIRNET'' (grant number ANR-25-IABT-0003).}

%Supplemental Material:
%Data Ethics & Reproducibility Note:

% Sample
%\KEYWORDS{Stochastic programming, Decision support,Uncertainty, Disaster response, Optimization}

% Fill in data. If unknown, outcomment the field
\KEYWORDS{Boolean network, precision medicine, bilevel optimization, Benders decomposition}
%\HISTORY{Received: Month DD, YYYY; Accepted: Month DD, YYYY; Published Online: Month DD, YYYY}

\maketitle
%%%%%%%%%%%%%%%%%%%%%%%%%%%%%%%%%%%%%%%%%%%%%%%%%%%%%%%%%%%%%%%%%%%%%%

% Text of your paper here

\clearpage
\section{Introduction} \label{sec:introduction}
\subsection{Precision Medicine and Boolean Networks}
\textit{Precision medicine} is a paradigm that aims to provide tailored treatments to individuals based on their susceptibility and response to disease. The development of disease is regulated by interactions among many genes, biological molecules, and environmental factors; we refer to all these factors simply as `genes'.
An essential task of precision medicine is to identify combinations of
genetic alterations that either promote disease development or facilitate recovery \citep{lord_2017_PARP,vargas2016biomarker}. Because this process requires extensive experiments, an efficient computational tool is crucial to narrowing down the set of alterations.

Recent studies have utilized \textit{Boolean networks} (BNs) to model the development of diseases, demonstrating their practical relevance \citep{schwab_2020_Concepts}. A BN is a qualitative, discrete dynamical model that represents the activation levels of genes in a set $\GeneSet$ as binary variables.
Mathematically, it is a Boolean map $f:\{0,1\}^{\numgenes} \to\{0,1\}^{\numgenes}$ between binary vectors of dimension $\numgenes$ where 0 and 1 represent the low and the high activation levels, respectively. Interactions among genes are represented as Boolean formulas, referred to as \textit{transition formulas}. 
To construct a dynamical system, 
a \textit{state} is an assignment of Boolean values to genes where $2^{\numgenes}$ states are possible.
Given a state $\vbx$, the \emph{synchronous update mode} updates all variables together as the image of the state by $f$ (i.e., $\vbx$ becomes $f(\vbx)$). This mode has proven effective for modeling biological systems \citep{mori_2022_Attractor}; other possible update modes are summarized by \cite{pauleve_2021_Boolean}, but we leave them for future work.

BNs offer several advantages over alternative modeling approaches. Unlike machine-learning models, they explicitly represent the disease dynamics and causal interactions among genes. They are also more computationally efficient and scalable than continuous dynamical models such as differential equations.

We are interested in the long-term recurrent sets of states, called \textit{attractors}.
An attractor is a mutually reachable set of states where no transition leaves the set \citep{kauffman1993origins}.
For the synchronous update, state $\vbx$ is in an attractor if and only if there exists a natural number $T$ such that $\vbx=f^T(\vbx)$. The smallest such number $T$ is called the \emph{length},
and the set of states $\{f(\vbx), f^2(\vbx), \ldots, f^T(\vbx)\}$ constitutes the attractor.
Attractors are considered to determine the long-term behavior of the dynamical system because every initial state eventually reaches one of the attractors.
A relevant concept is a \textit{trap space}, a Boolean subspace where certain variables are fixed at either 0 or 1, and no transition leaves the subspace \citep{klarner_2015_Computing}. Trap spaces are important in this paper because every trap space includes one or more attractors as a subset, providing a tractable over-approximation of attractors.

The biological or medical condition of each state can be evaluated by another Boolean function representing the \textit{phenotype}, denoted by $\phiFunction:\{0,1\}^{\numgenes}\to\{0,1\}$. Depending on the purpose of modeling, it may represent either the diseased condition or the normal condition, as inferred from accumulated experiments and clinical trials. If every state in all attractors satisfies the disease phenotype, this strongly suggests that the disease will develop. This analysis has been widely used to identify treatments for bladder cancer \citep{grieco_2013_Integrative,remy_2015_Modeling}, breast cancer \citep{biane_2019_Causal}, and so on.

\newcommand{\DrawColoredExampleNodes}{
	% the lower plane 
	\draw[] node at (000) {$000$};
	\draw[] node at (001) {$001$};
	\draw[] node at (010) {$010$};
	\draw[] node at (100) {$100$};
	\draw[] node at (101) {$101$};
	\draw[] node at (110) {$110$};
	\draw[msored] node at (011) {$011$};
	\draw[msored] node at (111) {$111$};
}

\newcommand{\DrawExampleNodes}{
	% the lower plane 
	\draw[] (0,0) node(101) {101};
	\draw[] (3,0) node(111) {111};
	\draw[] (2 + 0.2, 1 - 0.2) node(011) {011};
	\draw[] (1 - 0.2, 1 - 0.2) node(001) {001};

	% the upper plane
	\draw[] (0,3) node(100) {100};
	\draw[] (1 - 0.2, 2 + 0.2) node(000) {000};
	\draw[] (2 + 0.2, 2 + 0.2) node(010) {010};
	\draw[] (3,3) node(110) {110};
}

\tikzstyle{attractor_fill} = [rectangle,draw,fill=black!20,minimum height=1.4em,minimum width=1.8em]
\tikzstyle{normal_arc} = [-{stealth[flex=0.75]},black]
\tikzstyle{thick_arc} = [-{stealth[flex=0.75]},black,very thick]
\def\exgapEps{0.09}
\def\exgaphalf{0.2}
\def\exgap{0.4}
\def\exgapL{0.5}
\def\exgapLL{0.6}
\def\exgapLLL{0.8}

\newcommand{\DrawEmphAttractors}{
	% the lower plane 
	\node at (0,0) [attractor_fill] {};
	\node at (3,0) [attractor_fill] {};
	\node at (2.2,0.8) [attractor_fill] {};
	\node at (0.8,2.2) [attractor_fill] {};
	% \node at (0.8,2.2) [densely dashed,rectangle,gray,thick, draw,minimum height=2.5em,minimum width=3.0em] {};
}
\newcommand{\DrawExampleArcs}{
	\draw[normal_arc] (100) ..  controls +(down:0.5) and +(left:1) .. (001);
	\draw[normal_arc] (010) -- (001);
	\draw[normal_arc] (001) -- (101);
	\draw[thick_arc] (011) -- (101);
	\draw[thick_arc] (101) -- (111);
	\draw[thick_arc] (111) -- (011);
	\draw[normal_arc] (110) ..  controls +(down:0.5) and +(right:1) ..  (001);
	\draw[thick_arc] (000) +(up:\exgap) arc (-100:230:2.5mm);
}

\newcommand{\DrawEmphNewAttractors}{
	% the lower plane 
	\node at (3,0) [attractor_fill] {};
	\node at (2.2,0.8) [attractor_fill] {};
}

\newcommand{\DrawTrapSpace}[1]{
	\node at (0.8,2.2) [densely dashed,rectangle,gray,thick, draw,minimum height=2.5em,minimum width=2.9em] {};

	\draw[densely dashed,rectangle,gray,thick] (0-\exgapL,0-\exgapLL) rectangle (3+\exgapL,1+\exgapL);
	\draw[#1] (0-\exgapLLL,0-\exgapLLL) rectangle (3+\exgapLLL,3+\exgapLLL);

}
\newcommand{\DrawNewTrapSpace}[1]{
	% \draw[#1] (2-\exgapL,3+\exgap) -- (3+\exgapL,3+\exgap)  -- (3+\exgapL,0-\exgap) -- (2-\exgapL,0-\exgap) -- cycle;
	\draw[densely dashed,orange,thick] (2-\exgapL,0-\exgapL) rectangle (3+\exgapLL,3+\exgapL);
	\draw[#1] (0-\exgapL,0-\exgapLL) rectangle (3+\exgapL,1+\exgapL);
	\draw[#1] (0-\exgapLLL,0-\exgapLLL) rectangle (3+\exgapLLL,3+\exgapLLL);
	\draw[densely dashed,gray,thick] (2-\exgapL,0-\exgapL) +(\exgapEps,\exgapEps) rectangle ([shift={(-\exgapEps,-\exgapEps)}] 3+\exgapL,1+\exgapL);
}

\tikzset{legendBoxStyle/.style={draw,minimum width=1.8cm, minimum height=0.8cm}}
\newcommand{\STGLegend}{
	\begin{tikzpicture}
		\SingleSpacedXII
		\small
		
		\begin{scope}[local bounding box=legendBox]
			
			% Attractor
			\node[legendBoxStyle, fill=gray!30, text=black] at (0,0) {Attractor};

			% Trap space
			\node[legendBoxStyle, dashed, gray, text=black] at (0,-1) {Trap space};

			% Control
			\node[legendBoxStyle, dashed, orange, text=black] at (0,-2) {Control};
			
		\end{scope}

		% Surrounding gray box
		% \node[draw, gray!50, fit=(legendBox), inner sep=8pt] {};
		
	\end{tikzpicture}
}

\newcommand{\DrawExampleArcsSync}[1]{
	\ifnum #1 = 1
		\def\thickarc{thick_arc}
		\def\thickarcTwo{thick_arc_two}
	\else
		\def\thickarc{normal_arc}
		\def\thickarcTwo{normal_arc}
	\fi
	\draw[normal_arc] (100) ..  controls +(down:0.5) and +(left:1) .. (001);
	\draw[normal_arc] (010) -- (001);
	\draw[normal_arc] (001) -- (101);
	\draw[\thickarc] (011) -- (101);
	\draw[\thickarc] (101) -- (111);
	\draw[\thickarc] (111) -- (011);
	\draw[normal_arc] (110) ..  controls +(down:0.5) and +(right:1) ..  (001);

	% \draw[\thickarc] (111) ..  controls +(up:0.8) and +(right:0.8) ..   (011);

	% \draw[\thickarcTwo] (000) +(up:\exgap) arc (-100:240:1.5mm);
}

\begin{figure}[t]
	
	\FIGURE{
	\subfloat[Inputs\label{subfig:example-transition-formula}]{%
		\parbox[b]{0.26\linewidth}{
			\centering
			\SingleSpacedXI
			\small
			\begin{tabular}{r@{}l}
				\toprule
				\multicolumn{2}{l}{\textit{\textbf{The transition formulas}}}                 \\
				\midrule
				$f_1(\vbx)$ & \, $= (\neg x_1 \vee \neg x_2) \wedge x_3$ \\
				$f_2(\vbx)$ & \, $= x_1 \wedge x_3$                      \\
				$f_3(\vbx)$ & \, $= x_1 \vee x_2 \vee x_3$              \\
				\midrule
				\multicolumn{2}{l}{\textit{\textbf{The phenotype}}}                   \\
				\midrule
				$\phiFunction(\vbx)$ & \, $= x_2 \wedge x_3$                    \\
				\bottomrule
			\end{tabular}
			\vspace{0pt}
		}
	}\hfill
	\subfloat[STG (no control)\label{subfig:example-state-transition-graph}]{%
		\parbox[b]{0.28\linewidth}{
			\centering
			\begin{tikzpicture}[scale=0.9]
				\DrawEmphAttractors
				\DrawExampleNodes
				\DrawExampleArcsSync{1}
				\DrawColoredExampleNodes{}
				\DrawTrapSpace{densely dashed,gray,thick}
			\end{tikzpicture}
		}
	}\hfill
	\subfloat[STG (control fixing $x_2=1$)\label{subfig:example-state-transition-graph-control}]{%
		\parbox[b]{0.28\linewidth}{
			\centering
			\begin{tikzpicture}[scale=0.9]
				\DrawEmphNewAttractors
				\DrawExampleNodes
				\DrawNewTrapSpace{densely dashed,gray,thick}
				\draw[normal_arc] (100) .. controls +(down:0.5) and +(left:1) .. (011);
				\draw[normal_arc] (000) -- (010);
				\draw[normal_arc] (010) -- (011);
				\draw[normal_arc] (001) -- (111);
				\draw[thick_arc] (011) .. controls +(right:0.8) and +(up:0.8) .. (111);
				\draw[normal_arc] (101) -- (111);
				\draw[thick_arc] (111) .. controls +(up:0.8) and +(right:0.8) .. (011);
				\draw[normal_arc] (110) .. controls +(down:0.5) and +(north east:1) .. (011);
			\end{tikzpicture}
		}
	}\hfill
	\begin{minipage}[b]{0.12\linewidth}
		\vspace{0pt}
		\small
		\centering
		\STGLegend{}
	\end{minipage}
	}
	{A Boolean network example and state transition graphs (STGs)\label{fig:example}}{}
\end{figure}
\Cref{fig:example} shows an example of a BN with three genes denoted as
1--3. \Cref{fig:example}\protect\subref{subfig:example-transition-formula} presents the
transition formulas. \Cref{fig:example}\protect\subref{subfig:example-state-transition-graph} shows the corresponding state transition graph (STG) under the synchronous update. The example presents two attractors $\{(000)\}$ and $\{(101),(111),(011)\}$. Ideally, all attractor states would satisfy the phenotype, but this does not hold for states $(000)$ and $(101)$.
\Cref{fig:example}\protect\subref{subfig:example-state-transition-graph} also illustrates two trap spaces that can be denoted as $(000)$ and $(**1)$, where 0 or 1 implies the corresponding variable is fixed to that value and $*$ implies it is not fixed. 

\subsection{The Synchronous Attractor Control Problem (SACP)}

In a BN, \emph{control} means permanently fixing the transition formula of some genes to either 0 or 1, and their activation levels accordingly \citep{biane_2026_Why}.
A control may change the set of attractors, as well as the phenotype compositions of the states in attractors. Biologists consider controls fixing fewer genes more relevant, as they highlight the parsimonious causes of a disease and key targets for treatment \citep{biane_2019_Causal}.
At the same time, more candidates are desirable to validate with detailed
experiments. Hence, the main objective is to enumerate all \textit{minimal controls}, defined as follows.
\begin{definition}[Minimal control]
A control is \emph{feasible} if all states of every attractor under the control satisfy the phenotype. A feasible control is \emph{minimal} if it is inclusion-wise minimal, i.e., unfixing any single gene would violate feasibility.	
\end{definition}
As a recent survey of control problems in BNs shows \citep{biane_2026_Why}, enumeration of minimal controls is a common goal for more than 16 existing tools.

\Cref{fig:example}\protect\subref{subfig:example-state-transition-graph-control} illustrates the
new state transition graph of \Cref{fig:example}\protect\subref{subfig:example-transition-formula} under
the control fixing $x_2=1$, overriding the original transition formula $f_2(\vbx)=x_1 \wedge x_3$. The previous attractor $\{(000)\}$, which does not satisfy the phenotype, completely
vanishes. As a result, all states of the remaining attractor $\{(011),(111)\}$ satisfy the phenotype.
The set of trap spaces also changes by the control; $(000)$ is removed, while $(*11)$ and $(*1*)$ are introduced.
Applying $x_3=1$ in addition to $x_2=1$ is also feasible, but we will not accept it as a minimal control because $x_2=1$ alone is feasible.

Let $\Tmax\in \mathbb{Z}^{+}$ be the maximum attractor length considered:
attractors longer than $\Tmax$ will be discarded when evaluating the
phenotype. We will refer to the 
enumeration problem of minimal controls as
\textit{the synchronous attractor control problem with the maximum length $\Tmax$}, denoted as \TmaxSACP{}.
For example, the problem limited to length one can be denoted as \SACP{1}, and the unrestricted problem is denoted as \InfiniteSACP{} ($\equiv \SACP{2^{\numgenes}}$).

\def\tabHeader{\toprule
     \textbf{Reference} & $\Tmax$ & \textbf{Task} & \textbf{Algorithm} \\ \midrule
}
\begin{table}[t]
	\caption{Literature on the Synchronous Attractor Control Problem.}\label{tab:literature-review}
	\SingleSpacedXI
\centering
\begin{tabular}{llll}
	\tabHeader
      %%%
      %%%                                                             
 \citet{hayashida_2008_Algorithms}                          &      1  & Search & \textbf{Brute-force} (exhaustive)                                 \\
 \citet{akutsu_2012_Integer}; \citet{qiu_2014_Control}      &      1  & Search & \textbf{IP} (alternating two models)                      \\
 \citet{murrugarra_2016_Identification}                     &      1  & Enumeration & \textbf{Symbolic} (Gr\"obner basis)                            \\
 \citet{biane_2019_Causal}                &      1  & Enumeration & \textbf{Symbolic} (binary decision diagram)        \\
 \citet{moon_2022_Bilevel}               &      1  & Enumeration & \textbf{Bilevel IP} (branch-and-bound) \\
 \midrule
 \citet{cifuentesfontanals_2020_Control}                    &      $\infty$ & Enumeration &\textbf{Brute-force} (heuristic)                                 \\
 \citet{cifuentes-fontanals_2022_Control} &      $\infty$ & Enumeration & \textbf{Symbolic} (model checking)              \\ \midrule
 \textbf{This paper}               &      Finite  & Enumeration & \textbf{Bilevel IP} (logic-based Benders)          \\ 
\bottomrule
\end{tabular}
\end{table}

Previous studies have mostly restricted the length of attractors to one. \cite{hayashida_2008_Algorithms} first developed an exponential-time search algorithm for \SACP{1}. Later, a few algorithms used \textit{integer programming} (IP) models by alternately solving two problems \citep{akutsu_2012_Integer,qiu_2014_Control}. For more efficient enumeration, computational algebra \citep{murrugarra_2016_Identification} and binary decision diagrams \citep{biane_2019_Causal} have been used. Recently, \cite{moon_2022_Bilevel} proposed a \emph{bilevel integer programming} (Bilevel IP) formulation of \SACP{1} and developed a scalable branch-and-bound algorithm.
On the other hand, research for $\Tmax > 1$ is quite limited. To the best of our knowledge, 
only \cite{cifuentes-fontanals_2022_Control} has addressed \InfiniteSACP{} using a model checking solver \citep{NuSMV2} with some heuristics developed by \cite{cifuentesfontanals_2020_Control}. However, the method often suffers from scalability issues. 

\Cref{tab:literature-review} summarizes the literature on \TmaxSACP{} and \InfiniteSACP{}. As the table shows, symbolic methods are the state-of-the-art algorithms, which is natural given the Boolean nature of the problem. Surprisingly, the recent Bilevel IP approach by \cite{moon_2022_Bilevel} showed superior performance over symbolic methods for \SACP{1}. This paper extends the bilevel IP framework of \cite{moon_2022_Bilevel} for $\Tmax=1$ to arbitrary finite $\Tmax$, filling the gap in the literature. We will later show that if $\Tmax$ is sufficiently large, the resulting algorithm can also be used for \InfiniteSACP{}.

\subsection{Bilevel Integer Programming and Logic-Based Benders Decomposition}

Bilevel IP \citep{kleinert_2021_Survey} involves two decision-makers, referred to as the \textit{leader} and the \textit{follower}, who make decisions sequentially as follows.
The leader first makes a decision anticipating the optimal reactions of the follower, and the follower then optimally solves their own optimization problem based on the leader's decision. The leader's
problem is referred to as the \textit{upper-level problem} (ULP) and the follower's problem is referred to as the
\textit{lower-level problem} (LLP). The main interest is to find an optimal solution to the ULP, which is formulated as a nested optimization problem. \emph{Coupling constraints} are constraints of the ULP that involve both the leader's and the follower's decision variables. They are important sources of computational challenges as they create a strong interdependence between the two problems.

\TmaxSACP{} is computationally challenging even with the restriction to attractor length one. Finding a single minimal control in \SACP{1} is $\Sigma^P_2$-hard \citep{akutsu_2012_Integer,moon_2022_Bilevel}, which implies that no general compact single-level reformulation is expected unless the polynomial hierarchy collapses \citep{stockmeyer1976Polynomialtime}. Hence, bilevel IP is not merely a convenient modeling choice here; it is the natural structure of the problem. 
For \TmaxSACP{}, the leader finds a minimal control, while the follower searches for an attractor that violates the phenotype under that control. Existing bilevel IP solvers such as \MibS{} \citep{tahernejad_2020_Branchandcut} can be used to solve \TmaxSACP{}. However, they rely on branch-and-bound and branch-and-cut methods \citep{moore_1990_Mixed,xu_2014_Exact,fischetti_2017_New,wang_2017_Watermelon,kleinert_2021_Survey}, which struggle to solve large instances of \TmaxSACP{}. We therefore develop a logic-based Benders decomposition specialized to this structure.

% \subsection{Logic-Based Benders Decomposition and Contributions}

% Logic-based Benders decomposition (LBBD) partitions a problem into a master problem and subproblem(s), then derives feasibility or optimality cuts from problem structure \citep{hooker_2003_Logicbased,hooker_2024_LogicBased,rahmaniani_2017_Benders}. This makes LBBD attractive for discrete models, but its use in bilevel IP is limited when the LLP contains integer variables.

The \emph{logic-based Benders decomposition} (LBBD) \citep{hooker_2003_Logicbased,hooker_2024_LogicBased} partitions an (usually single-level) optimization problem into a master problem and a subproblem. First, a \emph{master problem} decides one subset of variables subject to the constraints including those variables only. Given a candidate partial solution from the master problem, a \emph{subproblem} is solved to decide the remaining variables. If the candidate yields an infeasible or suboptimal subproblem, a corresponding Benders \emph{feasibility cut} or an \emph{optimality cut} is added to the master problem to remove the bad candidate. This process repeats until an optimal solution is found or all candidates are exhausted. 

The LBBD can be used for a wide variety of nonlinear and discrete optimization problems \citep{hooker_2024_LogicBased,rahmaniani_2017_Benders}. However, its application to bilevel IPs is limited. \emph{Benders binary cuts} \citep{denegre_2011_Interdiction} and \emph{Benders interdiction cuts} \citep{caprara_2016_Bilevel} are LBBD-style cuts available in the \MibS{} solver, but they require strict structural conditions. 
We address this gap with an \textit{infeasibility-based Benders decomposition} (IBBD) defined as follows.
\begin{definition}[Infeasibility-based Benders decomposition (IBBD)]\label{def:ibbd}
IBBD is a variant of LBBD for bilevel IPs, in which each subproblem is dedicated to finding a certain type of feasibility cut. The core idea is to decompose the subproblem by the type of coupling constraint violation as follows.
\begin{enumerate}[(i)]
\item \textbf{Master problem.}
	The coupling constraints and the lower-level optimality condition are relaxed from the ULP, leaving upper-level variables only.
\item \textbf{Aggregated subproblem.}
	Given a candidate, the LLP is solved with a lexicographic objective: the primary objective is the original LLP objective, and the secondary objective minimizes the violation of coupling constraints. % I presume lexicographic multiobjective is familiar to the OR community
	If the optimal violation is nonzero, the candidate is certified infeasible and a feasibility cut is derived.
\item \textbf{(Optional) Subproblem further decomposed by violation type.}
	If the violation type has multiple categories, the aggregated subproblem is further split into multiple subproblems by the type. This decomposition is valid if any candidate infeasible to ULP has at least one decomposed subproblem producing a feasibility cut. Subproblems are prioritized by their expected computational benefit. 
\end{enumerate}
\end{definition}
Here, we focus on applying IBBD to \TmaxSACP{}; \Cref{appendix:general-ibbd} discusses its generalization to other bilevel IPs with formal mathematical models.
In \TmaxSACP{}, a candidate control is infeasible to ULP if a \emph{forbidden attractor} is found under that candidate. An attractor is forbidden if at least one of its states violates the phenotype (i.e., $\phiFunction(\vbx)=0$ for some state $\vbx$ in the attractor). The aggregated subproblem is decomposed into smaller subproblems to detect a forbidden attractor of a specific length. We say $\Tmax$ is \emph{sufficiently large} when no control feasible to \TmaxSACP{} has a forbidden attractor longer than $\Tmax$ under that control. With a sufficiently large $\Tmax$, the \TmaxSACP{} is equivalent to \InfiniteSACP{}, and thus our algorithm can solve \InfiniteSACP{} as well.

In addition to the decomposed subproblems, we also solve an auxiliary subproblem that finds a \emph{fully forbidden trap space} where every state violates the phenotype. Since this includes one or more forbidden attractors, the resulting Benders cut is stronger under a certain condition, which is often satisfied in practice.

The main contributions of this paper are as follows.
\begin{enumerate}
	\item We extend the bilevel IP approach of \cite{moon_2022_Bilevel} from $\Tmax=1$ to arbitrary finite $\Tmax$ to capture more attractors that involve critical subsets of genes to control. With a sufficiently large $\Tmax$, the resulting algorithm can also be used for \InfiniteSACP{}, outperforming the state-of-the-art model checking approach by \cite{cifuentes-fontanals_2022_Control}.
	\item We propose a new type of logic-based Benders decomposition for the 
	synchronous attractor control problem, which decomposes the problem using the coupling constraint violation. We demonstrate how feasibility cuts can be strengthened by solving an auxiliary subproblem, and discuss how the entire framework can be generalized to bilevel IPs.
	\item We distribute the proposed algorithm as an open-source package \href{https://github.com/MSOLab/optboolnet}{\texttt{optboolnet}} (Software Heritage Identifier \href{https://archive.softwareheritage.org/swh:1:dir:194b2539a59ee22821b6b388b653376694045adc}{swh:1:dir:194b2539a59ee22821b6b388b653376694045adc}) via CoLoMoTo Docker \citep{naldi_2018_CoLoMoTo}, a Python platform for reproducible analyses of Boolean network models. Our package is expected to accelerate the discovery of new treatments with a reproducible computational pipeline.
\end{enumerate}

The remainder of this paper is organized as follows.
In \Cref{sec:model}, we present the bilevel IP
model for finding a single minimal control of \TmaxSACP{}. Based on this model, we develop the IBBD framework and present several proofs regarding the correctness and strength of Benders cuts in \Cref{sec:benders-algorithm}. Numerical experiments demonstrating the efficiency of the proposed algorithms are reported in \Cref{sec:computational-experiments}, followed by concluding remarks in \Cref{sec:conclusion}.

\section{Bilevel IP Model for the Synchronous Attractor Control Problem}\label{sec:model}

\subsection{Overview}
%
% \TmaxSACP{} is solved by repeatedly asking whether there exists a minimal control of a prescribed size $\TargetSize$ under which every attractor of length at most $\Tmax$ satisfies the phenotype. The leader chooses such a control, and the follower searches for a forbidden attractor under that control. We present the LLP first because it is the main modeling novelty: it encodes forbidden-attractor detection for all lengths up to $\Tmax$ in one formulation.

\begin{table}[tb]
	% \centering
	\TABLE{The notation for the bilevel integer programming model for \TmaxSACP \label{tab:parameters}}{
		\begin{tabular}{L{2.2cm}L{13.6cm}}
			\toprule
			\multicolumn{2}{l}{\textbf{Sets and parameters}}                                                                                                                                                                                                      \\ \midrule
			$\ControlSet$ / $\PrevControlSet$                             & The set of (all / previously found) minimal controls ($\PrevControlSet \subseteq \ControlSet$)                                                                                                                                                                 \\
			%%%
			$\GeneSet$ / $\CtrlGeneSet$ / $\UnctrlGeneSet$ & The set of (all / controllable / uncontrollable) genes  $(\CtrlGeneSet \subseteq \GeneSet, \UnctrlGeneSet := \GeneSet\setminus \CtrlGeneSet)$                                               \\
			$\phiFunction(\cdot)$                     & The phenotype ($\phiFunction: \B^{\numgenes} \rightarrow \B$)                                                                                                            \\
			$f_i(\cdot)$                              & The transition formula of gene $i$  ($f_i: \B^{\numgenes} \rightarrow \B$ for $i\in \GeneSet$)                                                                                \\
			$C^1_i$ / $C^0_i$                      & The  set of clauses in a CNF of ($f_i$ / $\neg f_i$) ($i \in \GeneSet$)                                                                                                                      \\
			$\ClauseSet^k$                           & $\bigcup_{i \in \GeneSet}C^k_i$ ($k \in \B$)                                                                                                                                                  \\
			$L^{+}_c$ / $L^{-}_c$                  & The set of genes appearing as (positive / negative) literals in clause $c$ ($c\in \ClauseSet^0 \cup \ClauseSet^1$)                                                                            \\
			%%%
			$\TargetSize$								   & The target size, a given number of genes to fix ($\TargetSize\in \mathbb{Z}^+ \cup\{0\}$)                                                                                                                                \\
			%%%
			$\Tmax$                            & The given maximum length of attractors ($\Tmax\in \mathbb{Z}^{+}$)                                                                                                                        \\
			%
			%%%
			%
			$\phigene$                                       & The auxiliary gene for the phenotype ($\phigene \in \UnctrlGeneSet$)                                                                                                                             \\
			\midrule
			\multicolumn{2}{l}{\textbf{Decision variables} [Relevant models$^{\dagger}$]}                                                                                                                                                      \\ \midrule
			$d^k_{j}$ \hfill $[\ULP, \SubspaceP]$
			& 1 if controllable gene $j$ is fixed to $k\in\{0,1\}$ and 0 otherwise ($j\in\CtrlGeneSet$) \\
			$\phiIndicator$ \hfill $[\ULP, \LLP]$
			& 1 if the phenotype is satisfied at all times and 0 otherwise \\
			$x_{i,t}$ \hfill $[\LLP]$
			& 1 if gene $i$ is activated at time $t$ and 0 otherwise
			($i\in\GeneSet,\ t\in \llbracket \Tmax\rrbracket$) \\
			$y_{c,t}$ \hfill $[\LLP]$
			& Truth value of clause $c$ in the given CNF formula at time $t$
			($c\in\ClauseSet^1,\ t\in \{0\}\cup\llbracket \Tmax\rrbracket$) \\
			$w_t$ \hfill $[\LLP]$
			& 1 if the selected attractor length is $t$ and 0 otherwise ($t\in \llbracket \Tmax\rrbracket$) \\
			$u^k_j$ \hfill $[\SubspaceP]$
			& 1 if controllable gene $j$ is fixed to be $k$ by a control and 0 otherwise ($j \in \CtrlGeneSet, k \in \B$) \\
			$h^k_i$ \hfill $[\SubspaceP]$
			& 1 if gene $i$ is fixed to be $k$ in the trap space and 0 otherwise ($i \in \GeneSet, k \in \B$) \\
			\bottomrule
		\end{tabular}}{
			$^{\dagger}$$\ULP$, $\LLP$, and $\SubspaceP$ denote variables of the upper-level problem, the lower-level problem, and the subspace separation, respectively. Bold and straight symbols are for vector notation.
			
			% A hat denotes a fixed value of a decision variable.
		}
		\end{table}
\Cref{tab:parameters} presents the notation used throughout the paper. We consider the gene set $\GeneSet$, where each gene $i\in\GeneSet$ has a Boolean activation level. Subsets $\CtrlGeneSet$ and $\UnctrlGeneSet$ denote the sets of \emph{controllable} and \emph{uncontrollable} genes, respectively. For decision variables, a plain symbol denotes a single variable, while a bold and straight symbol denotes a vector of all variables in the context.
We are given the maximum attractor length $\Tmax$. We use $\RangeOper{T}:=[1,\ldots,T]$ to denote a list of \emph{times} for a positive integer $T\leq \Tmax$. We denote by $x_{i,t}\in\B$ the activation level of gene $i$ at time $t$ in an attractor. A state vector at time $t$ is denoted as $\vbx_t$, which is a collection of $x_{i,t}$ for all $i\in \GeneSet$. If no control is applied, an attractor of length $T$ should satisfy $\vbx_{t}=f(\vbx_{t-1})$ for all $t\in\Trange$, considering time 0 equivalent to $T$.

We assume that both the transition formula of gene $i$ and its negation are given in Conjunctive Normal Form (CNF) as follows:
$f_i := \bigwedge_{c \in C^1_i} \left(\bigvee_{l \in L^{+}_{c}} x_l\ \vee \ \bigvee_{l \in L^{-}_{c}} \neg x_l\right),
\neg f_i := \bigwedge_{c \in C^0_i} \left(\bigvee_{l \in L^{+}_{c}} x_l\ \vee \ \bigvee_{l \in L^{-}_{c}} \neg x_l\right)$
where $C^1_i$ and $C^0_i$ are the sets of clauses and $L^{+}_c$ and $L^{-}_c$ are the sets of positive and negative literals in clause $c$, respectively. Conversion of a Boolean function to CNF can be done quickly in practice. We represent the truth value of clause $c$ at time $t$ by a variable $y_{c,t}\in\B$ (i.e., $y_{c,t} = (\bigvee_{l \in L^{+}_{c}} x_l\ \vee \ \bigvee_{l \in L^{-}_{c}} \neg x_l)$). The \emph{transition condition} for genes can be written as $x_{i,t} \leftrightarrow \bigwedge_{c \in C^1_i} y_{c,t-1}, \forall i\in \GeneSet, t\in \Tmaxrange$. If a control fixes gene $j\in \CtrlGeneSet$ to $k\in \B$, the transition condition is overridden by $x_{j,t} = k$ for all $t\in \Tmaxrange$.
We assume that there is an auxiliary, uncontrollable gene $\phigene$ such that its transition formula $f_\phigene$ equals the phenotype $\phiFunction$; any BN can be simply augmented to achieve this. Hence, the phenotype is satisfied at time $t$ if and only if $x_{\phigene,t}=1$.

A control is represented as a vector $\vbd\in \B^{2\numctrlgenes}$, where $d^{k}_j = 1$ indicates that the transition formula and thus the activation level of controllable gene $j$ is fixed to $k$ ($\forall j \in \CtrlGeneSet, k \in \B$). The \emph{size} of a control is the number of genes it fixes, the sum of values in $\vbd$. A control is minimal if there is no other feasible control $\hat{\vbd}$ that fixes a subset of genes (i.e., $\hat{d}^k_j \leq d^k_j$ for all $j \in \CtrlGeneSet$ and $k \in \B$).

We first propose ULP $\ULPModel$ and the LLP $\LLPModel$ to find a single minimal control of size $\TargetSize$ for \TmaxSACP{}. Here, $\TargetSize$ is a \emph{target size} of a control to be used in an outer enumeration loop of the final algorithm. $\PrevControlSet$ denotes the set of previously discovered minimal controls, to be used in a constraint ensuring the minimality of the control found by $\ULPModel$. We denote the set of all minimal controls as $\ControlSet$ ($\PrevControlSet \subseteq \ControlSet$).

Conceptually, the ULP seeks a control $\vbd$ with $\phiIndicator=1$, meaning that every represented attractor satisfies the phenotype. However, the LLP reacts to $\vbd$ by minimizing $\phiIndicator$ to construct a forbidden attractor if one exists. This bilevel structure ensures that all states of every attractor under $\vbd$ satisfy the phenotype.
Later, starting from $(\TargetSize,\PrevControlSet)=(0,\emptyset)$, we will repeatedly solve $\ULP(\TargetSize,\PrevControlSet)$ to enumerate all minimal controls of size $\TargetSize$ for \TmaxSACP{} and add them to $\PrevControlSet$; when no further minimal controls exist at size $\TargetSize$, we increment $\TargetSize$ and continue. This iterative procedure enumerates all minimal controls for \TmaxSACP{} by employing the same outer loop of \cite{moon_2022_Bilevel}.

\subsection{Mathematical model}

% ------------------------------
% Lower-level problem
% ------------------------------
\paragraph{\textbf{The lower-level problem} $\LLPModel$}
\begin{align}
\min
\quad & \phiIndicator
\label[objective]{eq:llp-obj} \\
\text{s.t.}\quad
& x_{j,t} \le 1-d^{0}_j,\quad
  x_{j,t} \ge d^{1}_j
&& \forall j\in \CtrlGeneSet,\ t\in \Tmaxrange
\label[constraint]{eq:llp-fix} \\
& x_{j,t} \le y_{c,t-1} + (d^{0}_j+d^{1}_j)
&& \forall j\in \CtrlGeneSet,\ c\in C_j^{1},\ t\in \Tmaxrange
\label[constraint]{eq:llp-con-1} \\
& \textstyle x_{j,t} \ge 1-\sum_{c\in C_j^{1}}(1-y_{c,t-1}) - (d^{0}_j+d^{1}_j)
&& \forall j\in \CtrlGeneSet,\ t\in \Tmaxrange
\label[constraint]{eq:llp-con-2} \\[3mm]
& x_{i,t} \le y_{c,t-1}
&& \forall i\in \UnctrlGeneSet,\ c\in C_i^{1},\ t\in \Tmaxrange
\label[constraint]{eq:llp-uncontrollable-1} \\
& \textstyle x_{i,t} \ge 1-\sum_{c\in C_i^{1}}(1-y_{c,t-1})
&& \forall i\in \UnctrlGeneSet,\ t\in \Tmaxrange
\label[constraint]{eq:llp-uncontrollable-2} \\[3mm]
& y_{c,t} \ge x_{l,t}
&& \forall c\in \ClauseSet^{1},\ l\in L_c^{+},\ t\in \Tmaxrange
\label[constraint]{eq:llp-literal-1} \\
& y_{c,t} \ge 1-x_{l,t}
&& \forall c\in \ClauseSet^{1},\ l\in L_c^{-},\ t\in \Tmaxrange
\label[constraint]{eq:llp-literal-2} \\
& \textstyle y_{c,t} \le \sum_{l\in L_c^{+}} x_{l,t} + \sum_{l\in L_c^{-}} (1-x_{l,t})
&& \forall c\in \ClauseSet^{1},\ t\in \Tmaxrange
\label[constraint]{eq:llp-literal-3} \\[3mm]
& \textstyle \sum_{t=1}^{\Tmax} w_t = 1
\label[constraint]{eq:llp-w-sum} \\
& -(1-w_t) \le y_{c,0} - y_{c,t} \le (1-w_t)
&& \forall c\in \ClauseSet^{1},\ t\in \Tmaxrange
\label[constraint]{eq:llp-wrap} \\[3mm]
& \phiIndicator \le x_{\phigene,t}
&& \forall t \in \Tmaxrange
\label[constraint]{eq:llp-ph-ub} \\
& \textstyle \phiIndicator \ge 1 - \sum_{t=1}^{\Tmax} (1 - x_{\phigene,t})
\label[constraint]{eq:llp-ph} \\[3mm]
& \mathrlap{
	\vbx \in \B^{\numgenes \Tmax}, \quad
	\vby \in \B^{\numclauses (\Tmax+1)}, \quad
	\vbw \in \B^{\Tmax}, \quad
	\phiIndicator \in \B.}
\label[constraint]{eq:llp-domain}
\end{align}

The symbol $\LLPModel$ denotes the LLP with the maximum attractor length $\Tmax$.
Given a control $\vbd$, \Cref{eq:llp-obj} minimizes $\phiIndicator$ to check whether there is a forbidden attractor of length up to $\Tmax$.

\paragraph{Transition condition.} \Crefrange{eq:llp-fix}{eq:llp-con-2} ensure the transition condition in an attractor for controllable genes: $d^k_j$ from the ULP appears in these LLP constraints.
\Cref{eq:llp-fix} directly fixes the activation level of a controlled gene to the designated value (i.e., $d^{k}_j \rightarrow (x_{j,t}=k), \forall j\in \CtrlGeneSet, k\in \B, t\in \Tmaxrange$).
\Crefrange{eq:llp-con-1}{eq:llp-con-2} represent the transition condition for a controllable gene that is not fixed, enforcing that it follows the original transition formula encoded by CNF
(i.e., $(\neg d^0_j \wedge \neg d^1_j) \rightarrow (x_{j,t} \leftrightarrow \bigwedge_{c \in C^1_j} y_{c,t-1}), \forall j\in \CtrlGeneSet, t\in \Tmaxrange$).
\Crefrange{eq:llp-uncontrollable-1}{eq:llp-uncontrollable-2} are the analogous transition constraints for uncontrollable genes, which are not affected by the given control.
\Crefrange{eq:llp-literal-1}{eq:llp-literal-3} define the CNF clause variables $y_{c,t}$ as the disjunction of the corresponding literals at the same time $t$ (i.e., $y_{c,t} \leftrightarrow (\bigvee_{l\in L_c^{+}} x_{l,t}\ \vee \ \bigvee_{l\in L_c^{-}} \neg x_{l,t}), \forall c\in \ClauseSet^{1},\ t\in \Tmaxrange$).

\paragraph{Attractor length selection.} \Crefrange{eq:llp-w-sum}{eq:llp-wrap} formulate the choice of an attractor length.
\Cref{eq:llp-w-sum} enforces only one length $T^*\in\Tmaxrange$ to be chosen.
If time $T^*$ is chosen, $y_{c,0}=y_{c,T^*}$ for every clause $c\in\ClauseSet^1$ by \Cref{eq:llp-wrap}; otherwise, the constraint becomes meaningless. This ensures the transition from time $T^*$ to time 1 by \Crefrange{eq:llp-fix}{eq:llp-literal-3}, constituting an attractor.
We note that, once \Cref{eq:llp-wrap} enforces $y_{c,0}=y_{c,T^*}$, the states at times from $(T^*+1)$ to $\Tmax$ simply repeat the attractor cycle; hence every $t\in\Tmaxrange$ corresponds to an attractor state regardless of the chosen length $T^*$.

\paragraph{Phenotype.} \Crefrange{eq:llp-ph-ub}{eq:llp-ph} formulate the phenotype condition of $\phiIndicator$.
\Crefrange{eq:llp-ph-ub}{eq:llp-ph} enforce that $\phiIndicator=1$ whenever the auxiliary gene for phenotype is always active (i.e., $\phiIndicator \leftrightarrow \bigwedge_{t=1}^{\Tmax} x_{\phigene,t}$).
Finally, \Cref{eq:llp-domain} specifies the binary domains of
all lower-level variables.

\paragraph{\textbf{The upper-level problem} $\ULPModel$}
{
% \small\allowdisplaybreaks
\begin{align}
\text{Find}\quad & (\vbd, \phiIndicator) \in \B^{2\numctrlgenes} \times \B \label[objective]{eq:ulp-objective} \\
\text{s.t.}\quad & d^{0}_j + d^{1}_j \leq 1
&& \forall j \in \CtrlGeneSet
\label[constraint]{eq:ulp-exclusivity} \\
& \textstyle \sum_{j \in \CtrlGeneSet} (d^{0}_j + d^{1}_j) = \TargetSize
\label[constraint]{eq:ulp-target-size} \\
& \textstyle \sum_{j\in \CtrlGeneSet}\sum_{k\in \B}
  \hat{d}^{k}_j\,(1-d^{k}_j) \ge 1
&& \forall \hat{\vbd} \in \PrevControlSet
\label[constraint]{eq:ulp-minimality} \\
& \phiIndicator = 1
\label[constraint]{eq:ulp-p-hard} \\
& \phiIndicator = \min\{\text{\Cref{eq:llp-obj}}: \text{\Crefrange{eq:llp-fix}{eq:llp-domain}}\}
\label[constraint]{eq:ulp-llp-opt}
% & \vbd \in \B^{2\numctrlgenes},\quad \phiIndicator \in \B
% \label[constraint]{eq:ulp-domain}
\end{align}
}

\Cref{eq:ulp-objective} shows that the ULP is formulated as a feasibility problem. 
\Cref{eq:ulp-exclusivity} enforces the exclusivity that each controllable gene can be controlled to be 0 or 1, but not both.
\Cref{eq:ulp-target-size} restricts the number of controlled genes to be
exactly the given target size $\TargetSize$.
\Cref{eq:ulp-minimality} eliminates any control that is a superset of a
previously discovered minimal control $\hat{\vbd}\in \PrevControlSet$. This ensures that any control found is minimal, given that we have exhaustively enumerated all minimal controls of size strictly less than $\TargetSize$.
\Cref{eq:ulp-p-hard} fixes $\phiIndicator=1$ to make $\vbd$ a feasible control.
\Cref{eq:ulp-llp-opt} enforces that $\phiIndicator$ equals the optimal objective value of the LLP.
\Cref{eq:ulp-p-hard,eq:ulp-llp-opt} are coupling constraints ensuring that the phenotype is satisfied for all states in every attractor of length at most $\Tmax$ under the given control $\vbd$.

As in standard bilevel IP, $\vbd$ can be a part of a feasible solution to $\ULPModel$ only when $\LLPModel$ is feasible. In our case, such infeasibility can happen if a chosen control induces no attractor within the given length bound $\Tmax$.

\section{An Infeasibility-Based Benders Decomposition for \TmaxSACP{}}\label{sec:benders-algorithm}
% We first specialize the IBBD framework to \TmaxSACP{}, then derive two Benders cuts---the exact attractor cut (\Cref{subsec:benders-attractor-cut}) and the optional trap space cut (\Cref{subsec:trap-space-cut})---and present the full algorithm (\Cref{subsec:benders-algorithm}).

%
We now present the IBBD framework in \Cref{def:ibbd} for \TmaxSACP{}. The master problem finds a candidate control $\vbd$ by relaxing the coupling constraints (\Cref{eq:ulp-p-hard,eq:ulp-llp-opt}).
Although the IBBD requires a lexicographic objective in the aggregated subproblem, minimizing the violation of \Cref{eq:ulp-p-hard} coincides with the original LLP objective \Cref{eq:llp-obj}. Hence, no secondary objective is needed.
The aggregated subproblem decomposes naturally by attractor length: if $\phiIndicator=0$, the LLP solution must contain a forbidden attractor of some length $T\in\Tmaxrange$. This yields $\Tmax$ decomposed subproblems, where the $T$-th subproblem fixes $w_T=1$ and removes variables indexed beyond $T$. If a subproblem finds a forbidden attractor, a Benders feasibility cut is added to the master problem to exclude the candidate $\vbd$. Since most forbidden attractors have small lengths in practice, we prioritize subproblems with smaller $T$, which also have fewer variables.

\paragraph{The Infeasibility-Based Benders Decomposition for \TmaxSACP{}}
\begin{align*}
	& \text{\textbf{\emph{Master problem}}} & \MPModel &\ := \text{ Find } \vbd\in \B^{2 \numctrlgenes}
	\text{ s.t. \Crefrange{eq:ulp-exclusivity}{eq:ulp-minimality}}, \text{ Benders cuts in } \CutSet
	\\
	& T\text{\textbf{\emph{-th subproblem}}} & \SubproblemModelAtT &\ := \LLPModelAtT{T} \text{ s.t. } (w_T=1)
\end{align*}
Any optimal solution of $\SubproblemModelAtT$ can be extended to a feasible solution of $\LLPModel$ by setting $w_t=0$ for $t>T$, fixing controlled genes to their prescribed values, and propagating uncontrolled genes through the transition formulas for the remaining time steps.
Therefore, we use the same domains $(\vbx, \vby, \vbw, p) \in \B^{\numgenes \Tmax} \times \B^{\numclauses (\Tmax+1)} \times \B^{\Tmax} \times \B$ for the decomposed subproblem variables.

\def \xmargin{0.7}
\def \xwidth{1.5}

\begin{figure}[t]
	\SingleSpacedXI

	\FIGURE{
		\centering
		\begin{tikzpicture}[
			problem/.style={draw, minimum width=\xwidth cm, minimum height=1cm, inner sep=0pt, align=center},
			flow/.style={-stealth, thick}
		]
			%
			% cut generation problems
			\node[problem, anchor=west] (subproblem-one) at (3.0,0.5) {$\SubproblemAt{\vbd}{1}$};
			\node[problem, anchor=west] (subproblem-two) at ($(subproblem-one.east)+(0.6cm,0)$) {$\SubproblemAt{\vbd}{2}$};
			\node at ($(subproblem-two.east)+(0.825cm,0)$) (subproblem-dots) {$\cdots$};
			\node[problem, anchor=west] (subproblem-last) at ($(subproblem-two.east)+(1.65cm,0)$) {$\SubproblemAt{\vbd}{\Tmax}$};
			\node[problem, anchor=east] (subspace) at ($(subproblem-one.west)+(-3.0cm,0)$) {$\SubspaceModel$};
			%
			% the master problem
			\coordinate (master-sw) at ($(subproblem-one.south west)+(-0.1cm,1.5cm)$);
			\coordinate (master-ne) at ($(subproblem-last.north east)+(0cm,1.5cm)$);
			\draw[] (master-sw) rectangle (master-ne);
			\node at ($(master-sw)!0.5!(master-ne)$) (master) {\textbf{The master problem} $\MPModel$};
			%
			% data flow
			\node[anchor=east] (candidate) at ($(master-sw)+(-0.8cm,-0.25cm)$) {Candidate $\vbd$};
			\node[anchor=west] (cut) at ($(master-ne)+(0.85cm,-1.25cm)$) {Benders cut};
			\coordinate (subproblem-entry) at ($(subproblem-one.west)+(-0.4cm,0)$);
			\coordinate (subproblem-exit) at ($(subproblem-last.east)+(0.45cm,0)$);
			\draw[flow] ($(master-sw)+(-0.3cm,0.5cm)$) .. controls +(-0.5cm,-0.3cm) and +(-0.5cm,0.3cm) .. (subproblem-entry);
			\draw[flow] (subproblem-exit) .. controls +(0.5cm,0.3cm) and +(0.5cm,-0.3cm) .. ($(master-ne)+(0.45cm,-0.5cm)$);
			%
			% bounding box for the subproblems
			\draw [gray] ($(subproblem-one.south west)+(-0.15cm,-0.15cm)$) rectangle ($(subproblem-last.north east)+(0.15cm,0.15cm)$);
			\draw [-latex,dashed,thick] ($(subspace.east)+(0.2cm,-0.2cm)$) -- ($(subproblem-entry)+(0,-0.2cm)$);
			\draw [-latex,dashed,thick] ($(master.west)+(-0.65cm,0.3cm)$) -- ($(subspace.north)+(0,1.3cm)$) -- (subspace.north);
			%
			% labels
			\draw [decorate, decoration = {calligraphic brace,mirror,amplitude=5pt},thick]
				($(subspace.south west)+(0,-0.3cm)$) -- ($(subspace.south east)+(0,-0.3cm)$)
				node[pos=0.5,below=10pt,black,align=center]{\textbf{The subspace separation}\\(The trap space cut; heuristic)};
			\draw [decorate, decoration = {calligraphic brace,mirror,amplitude=5pt},thick]
				($(subproblem-one.south west)+(0.05cm,-0.3cm)$) -- ($(subproblem-last.south east)+(0,-0.3cm)$)
				node[pos=0.5,below=10pt,black,align=center]{\textbf{Subproblems}\\(The attractor cut; exact)};
		\end{tikzpicture}}{Overview of the Benders cuts\label{fig:algorithm-overview}}{}
\end{figure}

We use two types of Benders feasibility cuts: attractor cuts and trap space cuts.
An \emph{attractor cut} (\attrcut{}) removes all candidates that make a certain forbidden attractor lower-level feasible. If the length of such a forbidden attractor is $T$, the resulting \attrcut{} is valid for $\ULPModel$ with any $\Tmax \geq T$.
On the other hand, a \emph{trap space cut} (\tscut{}) removes all candidates that have a fully forbidden trap space.
Since such trap space includes one or more forbidden attractors \citep{klarner_2015_Computing}, the resulting \tscut{} is also valid for $\ULPModel$ if $\Tmax$ is sufficiently large. Otherwise, \tscut{} should be interpreted as an efficient heuristic for finite $\Tmax$.

\Cref{fig:algorithm-overview} illustrates two IBBD iterations with or without \tscut{}s. In the default settings, only \attrcut{}s are generated by solving the subproblems in increasing order of $T$. If \tscut{}s are enabled, we first solve the \emph{subspace separation} $\SubspaceModel$, an IP that seeks a fully forbidden trap space under a candidate control $\vbd$. If such a trap space is found, a \tscut{} is generated to exclude the candidate $\vbd$; otherwise, we proceed to solve the decomposed subproblems as in the default settings.
\subsection{The Attractor Cut}\label{subsec:benders-attractor-cut}
In this section, we investigate how a (decomposed) subproblem characterizes the controls that induce a given attractor (\Cref{thm:attractor-characterization}) and use it to develop the \attrcut{} (\Cref{thm:attractor-cut}). We first define three important binary indicators in \Cref{def:binary-indicators}.

\begin{definition}[Binary indicators]\label{def:binary-indicators}
	Given $
	(\vbx,\vby)\in\B^{\numgenes \Tmax}\times
		\B^{\numclauses (\Tmax+1)}$ and controllable gene
	$j\in \CtrlGeneSet$, we define binary indicators as follows.
	%
	% $ \alphaj := \bigwedge_{t \in \Trange } (x_{j,t}   \leftrightarrow x_{j,1})$,
	% %
	% $\betaj := \bigwedge_{t \in \Trange } (x_{j,t}   \leftrightarrow  \bigwedge_{c\in C^1_j} y_{c,t-1})$,
	% %
	% and
	% $\kj := x_{j,1}$.
	% \begin{itemize}
	% 	\item $\alphaj := \bigwedge_{t \in \Tmaxrange } (x_{j,t}   \leftrightarrow x_{j,1})$, indicating if the activation levels are constant at all times.
	% 	\item $\kj :=  x_{j,1}$, indicating the activation level at time 1.
	% 	\item $\betaj := \bigwedge_{t \in \Tmaxrange } (x_{j,t}   \leftrightarrow  \bigwedge_{c\in C^1_j} y_{c,t-1})$, indicating if the update follows the transition formula at all times.
	% \end{itemize}
	\begin{align*}
		\alphaj & := \bigwedge_{t \in \Tmaxrange } (x_{j,t}   \leftrightarrow x_{j,1}) && \text{(the activation levels are constant at all times)} \\
		\betaj & := \bigwedge_{t \in \Tmaxrange } (x_{j,t}   \leftrightarrow  \bigwedge_{c\in C^1_j} y_{c,t-1}) && \text{(the update follows the transition formula at all times)} \\
		\kj & :=  x_{j,1} && \text{(the activation level at time 1)}
	\end{align*}
\end{definition}
%
% Here, $\alphaj$ indicates whether gene $j$ has the same value at all times. 
% When $\alphaj=1$, $\kj$ denotes the fixed value. Otherwise, $\kj$ has no meaning. $\betaj$ indicates whether gene $j$ is  updated following the transition formula for all times. 

If gene $j$ is controlled, then $\alphaj =1$. Otherwise, we have $\betaj =1$ to satisfy the transition formula, which implies that $(1-\alphaj)(1-\betaj)=0$ for all $j\in\CtrlGeneSet$ if $\vbx$ forms an attractor under some control. This leads to the following characterization of feasible controls for a synchronous attractor.
\begin{theorem}[Characterization of feasible controls for a synchronous attractor]\label{thm:attractor-characterization}
	Let $(\hat{\vbx},\hat{\vby},\hat{\vbw},\hat{\phiIndicator})$ be a feasible solution of $\SubproblemAt{\hat{\vbd}}{T}$ for some $T\in\Tmaxrange$ and feasible control $\hat{\vbd}$.
	Let $\alphaj$, $\betaj$, and $\kj$ be the binary indicators defined in \Cref{def:binary-indicators} for each $j\in \CtrlGeneSet$ based on $(\hat{\vbx},\hat{\vby})$.
	% , so $(\vbx,\vby)$ satisfies \Crefrange{eq:llp-uncontrollable-1}{eq:llp-uncontrollable-2} and \Crefrange{eq:llp-literal-1}{eq:llp-literal-3}.
	% 
	For all $j\in \CtrlGeneSet$,
    define  $g_j(\hat{\vbx},\vbd)$ as an affine function of $\vbd$, given by
	\begin{subequations}
		\begin{align}
			g_j(\hat{\vbx},\vbd) &= d^{(1-\kj)}_j
			&& \text{if } \phantom{\neg}\alphaj \wedge \betaj
			\label{eq:attractor-characterization-1} \\
			g_j(\hat{\vbx},\vbd) &= 1 - d^{\kj}_j
			&& \text{if } \phantom{\neg}\alphaj \wedge \neg \betaj
			\label{eq:attractor-characterization-2} \\
			g_j(\hat{\vbx},\vbd) &= d^0_j + d^1_j
			&& \text{if } \neg \alphaj \wedge \betaj
			\label{eq:attractor-characterization-3}
		\end{align}
	\end{subequations}
	Then, for a control $\vbd$ satisfying \Cref{eq:ulp-exclusivity}, $\hat{\vbx}$ is an attractor under control $\vbd$ if and only if
	$g_j(\hat{\vbx},\vbd) = 0$ for all $j \in \CtrlGeneSet$.
\end{theorem}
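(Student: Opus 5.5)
The plan is to reduce ``$\hat{\vbx}$ is an attractor under control $\vbd$'' to a gene-by-gene condition on the controllable genes, and then check that condition against each case defining $g_j$.

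\textbf{Step 1: what the uncontrollable genes contribute.} Since $(\hat{\vbx},\hat{\vby},\hat{\vbw},\hat{\phiIndicator})$ is feasible for $\SubproblemAt{\hat{\vbd}}{T}$, we have $\hat w_T=1$, so \Cref{eq:llp-wrap} gives $\hat y_{c,0}=\hat y_{c,T}$. \Crefrange{eq:llp-literal-1}{eq:llp-literal-3} make each $\hat y_{c,t}$ the truth value of clause $c$ at $\hat{\vbx}_t$. \Crefrange{eq:llp-uncontrollable-1}{eq:llp-uncontrollable-2} make every uncontrollable gene follow $f_i$ along the cycle $\hat{\vbx}_1,\dots,\hat{\vbx}_T$. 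None of these constraints involve $\vbd$, so they stay satisfied for any other control. Hence $\hat{\vbx}$ is an attractor (cycle) under $\vbd$ if and only if, for every $j\in\CtrlGeneSet$, the synchronous update under $\vbd$ reproduces $\hat x_{j,t}$ from $\hat{\vbx}_{t-1}$ for all $t$. Under $\vbd$ satisfying \Cref{eq:ulp-exclusivity}, this per-gene condition reads
\[
\bigl(d^0_j=d^1_j=0 \wedge \betaj\bigr)\ \vee\ \bigl(d^{k}_j=1 \wedge \alphaj \wedge \kj=k \text{ for some } k\in\B\bigr).
\]
Equivalently, $\hat{\vbx}$ is a fixed assignment of the control-modified constraints \Crefrange{eq:llp-fix}{eq:llp-con-2}.

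\textbf{Step 2: case check.} Because $\hat{\vbd}$ is a control under which $\hat{\vbx}$ forms a cycle, the remark before the theorem gives $(1-\alphaj)(1-\betaj)=0$. So only the three cases \eqref{eq:attractor-characterization-1}--\eqref{eq:attractor-characterization-3} occur, and $g_j$ is well defined. In each case I will show that the per-gene condition holds exactly when $g_j=0$, using exclusivity, i.e. that $\vbd$ restricted to gene $j$ is one of $(0,0),(1,0),(0,1)$.
\begin{enumerate}[(i)]
\item If $\alphaj\wedge\betaj$: leaving $j$ unfixed works, and fixing $j$ to $\kj$ works, but fixing $j$ to $1-\kj$ fails. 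So the condition is $d^{1-\kj}_j=0$.
\item If $\alphaj\wedge\neg\betaj$: leaving $j$ free fails and fixing it to $1-\kj$ fails, so only $d^{\kj}_j=1$ works. This is exactly $1-d^{\kj}_j=0$.
\item If $\neg\alphaj\wedge\betaj$: fixing $j$ to either value contradicts the non-constant trajectory, so $j$ must be free. This is $d^0_j+d^1_j=0$.
\end{enumerate}
Each $g_j$ takes values in $\B$ under exclusivity, so in every case the per-gene condition is equivalent to $g_j(\hat{\vbx},\vbd)=0$.

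\textbf{Main obstacle.} The delicate point is Step 1. I need to argue that attractor membership under $\vbd$ is determined solely by $\hat{\vbx}$ and the per-gene conditions, with the clause variables $\hat{\vby}$ fixed by $\hat{\vbx}$ and the wrap constraint providing cyclicity. I also need to fix the convention that ``attractor'' means $\hat{\vbx}$ satisfies the cyclic transition equations of $\SubproblemAt{\vbd}{T}$. Under this convention the theorem states exactly that $(\hat{\vbx},\hat{\vby},\hat{\vbw})$ remains feasible for $\SubproblemAt{\vbd}{T}$, and the proof reduces to checking \Crefrange{eq:llp-fix}{eq:llp-con-2} gene by gene, as in Step 2.
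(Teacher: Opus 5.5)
Your proposal is correct and takes essentially the same route as the paper. Both arguments reduce attractor membership under $\vbd$ to the $\vbd$-dependent constraints \Crefrange{eq:llp-fix}{eq:llp-con-2}, since everything else satisfied by $(\hat{\vbx},\hat{\vby})$ is independent of $\vbd$, and then check the three admissible cases of $(\alphaj,\betaj)$ gene by gene; the paper merely packages the per-gene condition as the formula $(\neg d^{\kj}_j \vee \alphaj)\wedge \neg d^{(1-\kj)}_j \wedge (d^{\kj}_j \vee d^{(1-\kj)}_j \vee \betaj)$ before simplifying, whereas you enumerate the three exclusive values of $(d^0_j,d^1_j)$ directly.
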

The formal proof of \Cref{thm:attractor-characterization} is presented in \Cref{subsec:proof-attractor-characterization}. \Cref{thm:attractor-characterization} shows that $\alphaj$ and $\betaj$ are factors that determine the feasibility of a control with the following rules. If a controllable gene has the same value at all times ($\alphaj=1$), the required condition may depend on whether the gene's state is consistently updated following the transition formula at all times (i.e., whether $\betaj=1$). If it follows ($\betaj=1$),
it is sufficient that the control does not fix the gene to its negated value due to \Cref{eq:attractor-characterization-1}. If not ($\betaj=0$), however, a control must
fix the gene to have the current value due to \Cref{eq:attractor-characterization-2}.
On the other hand, when a controllable gene does not have a constant value over time,
no control should be applied due to \Cref{eq:attractor-characterization-3}.
The remaining case with $\neg \alphaj \wedge \neg \betaj$
is never activated for any attractor solution and can therefore be disregarded.

\begin{theorem}[The attractor cut]\label{thm:attractor-cut}
	Given a candidate control $\hat{\vbd}$ satisfying \Cref{eq:ulp-exclusivity}, suppose $(\hat{\vbx},\hat{\vby},\hat{\vbw},0)$ is an optimal solution to $\SubproblemAt{\hat{\vbd}}{T}$ (i.e., $\hat{\vbx}$ is a forbidden attractor).
	Then, 
	% $\sum_{j \in \CtrlGeneSet} g_j(\hat{\vbx},\hat{\vbd}) = \sum_{j \in \CtrlGeneSet} \left((1-\alphaj) d^{\kj}_j + \betaj  d^{(1-\kj)}_j  +  (1-\betaj) (1-d^{\kj}_j) \right) \geq 1$
	% 
	\begin{align}
		\sum_{j \in \CtrlGeneSet} g_j(\hat{\vbx},\vbd) = &  \sum_{j \in \CtrlGeneSet} \left((1-\alphaj) d^{\kj}_j + \betaj  d^{(1-\kj)}_j  +  (1-\betaj) (1-d^{\kj}_j) \right) \geq 1\label{eq:attractor-cut}
	\end{align}
	is a
	valid constraint for all feasible solutions to $\ULPModel$
	for any $\TargetSize \in \mathbb{Z}^+ \cup\{0\}$ and $\PrevControlSet\subseteq \ControlSet$. When added to the master problem, this constraint removes $\hat{\vbd}$ from the solution space as well as all controls $\vbd$ that induce
    $\hat{\vbx}$ as an optimal solution of $\SubproblemModelAtT$.
\end{theorem}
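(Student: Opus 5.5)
The argument rests on \Cref{thm:attractor-characterization}. First I would check that the closed form in \Cref{eq:attractor-cut} matches the case definition of $g_j$, and that each $g_j$ is a nonnegative integer under \Cref{eq:ulp-exclusivity}. Since $\hat{\vbx}$ is an attractor under the candidate $\hat{\vbd}$, every $j\in\CtrlGeneSet$ falls into one of the three cases \Crefrange{eq:attractor-characterization-1}{eq:attractor-characterization-3}; the case $\neg\alphaj\wedge\neg\betaj$ cannot occur, as argued before the theorem. I substitute each case into $(1-\alphaj) d^{\kj}_j + \betaj d^{(1-\kj)}_j + (1-\betaj)(1-d^{\kj}_j)$:
\begin{itemize}
\item If $\alphaj=\betaj=1$, the expression reduces to $d^{1-\kj}_j$.
\item If $\alphaj=1$ and $\betaj=0$, it reduces to $1-d^{\kj}_j$.
\item If $\alphaj=0$ and $\betaj=1$, it reduces to $d^{\kj}_j+d^{1-\kj}_j=d^0_j+d^1_j$.
\end{itemize}
So the sum in \Cref{eq:attractor-cut} equals $\sum_j g_j(\hat{\vbx},\vbd)$. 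Each term is a binary variable, one minus a binary variable, or $d^0_j+d^1_j\le 1$, so each $g_j\in\{0,1\}$ for any $\vbd$ satisfying \Cref{eq:ulp-exclusivity}. Consequently the negation of the cut, $\sum_j g_j<1$, is equivalent to $g_j(\hat{\vbx},\vbd)=0$ for all $j$.

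Next comes validity. Take any feasible $(\vbd,\phiIndicator)$ of $\ULPModel$, with arbitrary $\TargetSize$ and $\PrevControlSet$, and suppose for contradiction that it violates the cut. Then $g_j(\hat{\vbx},\vbd)=0$ for all $j$, so by \Cref{thm:attractor-characterization} $\hat{\vbx}$ is an attractor under $\vbd$. Its length is $T\le\Tmax$, and it is forbidden: optimal value $0$ means $\hat x_{\phigene,t}=0$ for some $t\in\Trange$. I would then build a feasible solution of $\LLPModel$ under $\vbd$ with $\phiIndicator=0$, using the extension described after the definition of the subproblems. Set $w_T=1$, let $\vbx_t$ repeat the cycle for $t>T$, and set the $y$ values by evaluating clauses. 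The controlled genes are consistent because $g_j=0$ forces $\hat{x}_{j,t}=k$ whenever $d^k_j=1$. Hence the LLP optimum is $0$, which contradicts \Cref{eq:ulp-p-hard,eq:ulp-llp-opt}. The argument uses only \Crefrange{eq:ulp-exclusivity}{eq:ulp-llp-opt} through \Cref{eq:ulp-exclusivity} and the coupling constraints, so it is independent of $\TargetSize$ and $\PrevControlSet$.

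The removal claims follow from the same equivalence. Since $\hat{\vbx}$ is an attractor under $\hat{\vbd}$, \Cref{thm:attractor-characterization} gives $g_j(\hat{\vbx},\hat{\vbd})=0$ for all $j$, so $\hat{\vbd}$ violates the cut. Likewise, any $\vbd$ under which $\hat{\vbx}$ is a feasible, hence optimal (objective $0$), solution of $\SubproblemModelAtT$ has $g_j=0$ for all $j$ and is removed.

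The main obstacle is a hypothesis mismatch in \Cref{thm:attractor-characterization}. It is stated with $\hat{\vbx}$ being feasible for some $\hat{\vbd}$, and the indicators $\alphaj,\betaj,\kj$ are computed from $(\hat{\vbx},\hat{\vby})$, so I must confirm they do not depend on $\vbd$. They do not: they are defined purely from $(\hat{\vbx},\hat{\vby})$ over $\Tmaxrange$, and $\hat{\vby}$ is determined by $\hat{\vbx}$ through \Crefrange{eq:llp-literal-1}{eq:llp-literal-3}. A second point to handle is that "attractor under $\vbd$" must coincide with lower-level feasibility under $\vbd$. For this I would rely on the equivalence already established in the proof of \Cref{thm:attractor-characterization}.
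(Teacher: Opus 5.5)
Your proposal is correct and follows essentially the same route as the paper: both use \Cref{thm:attractor-characterization} to identify a violation of the cut with $\hat{\vbx}$ being a (forbidden) attractor under $\vbd$, and both match the closed form to the case definition of $g_j$ using $(1-\alphaj)(1-\betaj)=0$. Your observation that each $g_j\in\{0,1\}$ under \Cref{eq:ulp-exclusivity}, and your explicit extension to an LLP solution with $\phiIndicator=0$, only spell out steps the paper leaves implicit.
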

The formal proof of \Cref{thm:attractor-cut} is presented in \Cref{subsec:proof-attractor-cut} based on the fact that $\hat{\vbx}$ must not be an attractor under any control. 
The \attrcut{} may involve the term $d^{(1-\kj)}_j$
or $1-d^{\kj}_j$, based on the value of $\betaj$. Due to \Cref{eq:ulp-exclusivity}, we have $d^{(1-\kj)}_j \leq (1-d^{\kj}_j)$. This implies that the
\attrcut{} gets stronger when a controllable gene is updated by the transition
formula for all times. Another remark is that if the actual length of the attractor is 1,
the \attrcut{} is equivalent to the Benders feasibility cut
proposed by \cite{moon_2022_Bilevel} to solve \SACP{1}. Hence, \Cref{eq:attractor-cut} serves as a proper extension for attractors of greater length.

%%%
%%%
\subsection{The Trap Space Cut: Strengthened Cut for Large $\Tmax$}\label{subsec:trap-space-cut}
%%%
%%%
An \attrcut{} eliminates candidate controls based on a single forbidden attractor. However, if multiple forbidden attractors share a common fully forbidden trap space, the \tscut{} can aggregate them into a single stronger cut under certain conditions. 
We first show that a fully forbidden trap space may remain fully forbidden under some controls (\Cref{thm:control-preserving-trap-spaces}). Then, with a sufficiently large $\Tmax$, the resulting \tscut{} is also valid for $\ULPModel$ (\Cref{thm:trap-space-cut}).
In this section, we denote a trap space as
$\vbh\in\B^{2\numgenes}$ with $h^k_i=1$ if and only if gene $i$
is fixed to the value $k$ in the trap space for $i\in \GeneSet$ and $k\in
	\B$.
By definition, $h^0_i+h^1_i\leq 1$ holds.
We say a state $\vbx$ \textit{belongs to} a trap space
$\vbh$ if the fixed values are consistent (i.e., $\forall i \in
	\GeneSet, k\in \B, (h^k_i=1) \rightarrow (x_i = k)$). We denote the control
inducing a trap space as $\vbu \in \B^{2 \numctrlgenes}$ to avoid
confusion with a candidate control $\vbd$.

\begin{theorem}[Trap spaces remaining under a control]\label{thm:control-preserving-trap-spaces}
	Suppose $\vbh\in \B^{2 \numgenes}$ is a given trap space under control $\vbu\in \B^{2  \numctrlgenes}$ that satisfies
	\Cref{eq:ulp-exclusivity}.
	Let $\vbd \in \B^{2 \numctrlgenes}$ be a candidate control that
	satisfies \Cref{eq:ulp-exclusivity} and $u^k_j \leq d^k_j$ for all $j\in
		\CtrlGeneSet, k \in \B$. Let $f'$ be the resulting Boolean network under control
	$\vbd$, defined by $f'_i(\vbx) = k$ if $i \in \CtrlGeneSet$ and $d^k_i = 1$ for some $k\in\B$; otherwise $f'_i(\vbx) = f_i(\vbx)$.
	% \begin{align*}
	% 	f'_i(\vbx) =
	% 	\begin{dcases}
	% 		k               & \text{if}~ d^k_i = 1 \text{ for }   i\in \CtrlGeneSet, k\in\B \\
	% 		f_i(\vbx) & \text{otherwise}
	% 	\end{dcases}.
	% \end{align*}
	%
	It holds that $\vbh$ is a trap space of Boolean network $f'$ if 
	% $(h^k_j=1) \rightarrow (d^{(1-k)}_j = 0)$ for all $j \in \CtrlGeneSet$ and $k \in \B$.
	\begin{align}
		(h^k_j=1) \rightarrow (d^{(1-k)}_j = 0)
		 & \quad \forall j \in \CtrlGeneSet, k \in \B.\label{eq:trap-space-condition}
	\end{align}
\end{theorem}
\begin{proof}{Proof of \Cref{thm:control-preserving-trap-spaces}}
	Assume, for contradiction, that the condition $(h^k_j=1) \rightarrow (d^{(1-k)}_j = 0)$ holds but $\vbh$ is not a trap space under control $\vbd$. Let $\mathcal{F} (\subseteq \CtrlGeneSet)$ denote the set of all genes fixed by
	$\vbu$.
	Then, there is a state $\vbx\in \B^{\numgenes}$, a gene $i \in \GeneSet$, and $k\in \B$ such that $\vbx$ belongs to $\vbh$ and $h^k_i=1$ but $f'_i(\vbx) = 1-k$. 
	If $i \in \UnctrlGeneSet$, $f'_i(\vbx) = f_i(\vbx)$ since $i$ is uncontrollable. As $\vbh$ is a trap space under $\vbu$ and $h^k_i=1$, we have $f_i(\vbx) = k (\neq 1-k)$, a contradiction. 
	If $i \in \CtrlGeneSet \cap \mathcal{F}$, this also cannot happen since $u^k_i \leq d^k_i$ implies that $f'_i(\vbx) = k$. 
	For the remaining case where $i \in \CtrlGeneSet \cap \mathcal{F}^\text{c}$, we consider two sub-cases. 
	If $d^k_i = 1$, then $f'_i(\vbx) = k (\neq 1-k)$, a contradiction. 
	If $d^k_i = 0$, then since $(h^k_i=1) \rightarrow (d^{(1-k)}_i = 0)$ implies $d^{(1-k)}_i = 0$, we have $d^0_i = d^1_i = 0$ and thus $f'_i(\vbx) = f_i(\vbx)$. Since $i \notin \mathcal{F}$, the BN under $\vbu$ also leaves gene $i$ uncontrolled, so $f_i(\vbx) = k$ follows from $\vbh$ being a trap space under $\vbu$ and $h^k_i=1$. This gives $f'_i(\vbx) = k (\neq 1-k)$, again a contradiction. Hence, every possible choice of $i$ contradicts the hypothesis, and thus $\vbh$ is a trap space under control $\vbd$.
	\hfill\Halmos
\end{proof}

As illustrated by the example in \Cref{fig:example}, trap space $(**1)$
remains a trap space under the control that fixes $x_2=1$. This is because
the control only affects the unfixed value of gene 2. On the other hand, the trap space $(000)$ is no longer a trap space under that control because the fixed value 0 is forced to its negated value. These observations align with \Cref{thm:control-preserving-trap-spaces}.

\begin{theorem}[The trap space cut]\label{thm:trap-space-cut}
	Let $\vbh \in \B^{2\numgenes}$ be a fully forbidden trap space under control $\vbu \in \B^{2\numctrlgenes}$ such that for all $\vbx$ belonging to $\vbh$, $x_{\phigene}=0$ holds.
	If $\Tmax$ is sufficiently large,
	\begin{align}
		\sum_{j \in \CtrlGeneSet} \sum_{k \in \B} (u^k_j(1- d^{k}_j) + (1-u^k_j)h^k_j d^{(1-k)}_j) \geq 1
		 & \label{eq:trap-space-cut}
	\end{align}
	is a valid constraint for all feasible solutions to $\ULPModel$
	for any $\TargetSize \in \mathbb{Z}^+ \cup\{0\}$ and $\PrevControlSet\subseteq \ControlSet$.
\end{theorem}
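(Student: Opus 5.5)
We argue by contraposition. Suppose a control $\vbd$ is feasible for $\ULPModel$ (in particular it satisfies \Cref{eq:ulp-exclusivity}) but violates \Cref{eq:trap-space-cut}. We will exhibit a forbidden attractor of length at most $\Tmax$ under $\vbd$, which contradicts the coupling constraints \Cref{eq:ulp-p-hard,eq:ulp-llp-opt}.

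Every term on the left-hand side of \Cref{eq:trap-space-cut} is a product of binaries and hence nonnegative. So if the left-hand side equals $0$, every term vanishes, and we read off two facts.
\begin{enumerate}[(i)]
\item From $u^k_j(1-d^k_j)=0$ we get $u^k_j\le d^k_j$ for all $j\in\CtrlGeneSet$ and $k\in\B$.
\item From $(1-u^k_j)h^k_jd^{(1-k)}_j=0$ we get that whenever $h^k_j=1$ and $u^k_j=0$, we have $d^{(1-k)}_j=0$.
\end{enumerate}
To get condition \Cref{eq:trap-space-condition} of \Cref{thm:control-preserving-trap-spaces}, we must also cover the case $h^k_j=1$ and $u^k_j=1$. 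In that case (i) gives $d^k_j=1$, and \Cref{eq:ulp-exclusivity} then forces $d^{(1-k)}_j=0$. Together with (ii), \Cref{eq:trap-space-condition} holds for every $j\in\CtrlGeneSet$ and $k\in\B$. Combined with (i), \Cref{thm:control-preserving-trap-spaces} now applies, and $\vbh$ is a trap space of $f'$, the network under $\vbd$.

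Next we turn the trap space into a forbidden attractor. Take any state $\vbx^0$ belonging to $\vbh$; one exists because $h^0_i+h^1_i\le1$. Since $\vbh$ is a trap space of $f'$, the whole trajectory $f'^t(\vbx^0)$ stays in $\vbh$. This trajectory lies in a finite set, so it eventually enters a cycle, which is an attractor of $f'$ contained in $\vbh$. By the fully forbidden hypothesis, every state of this attractor has $x_{\phigene}=0$, so the attractor is forbidden.

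The one delicate point is the attractor's length. It is at most $2^{\numgenes}$, but it need not be at most a given $\Tmax$. This is exactly where the hypothesis that $\Tmax$ is sufficiently large enters. By definition, no control feasible to \TmaxSACP{} has a forbidden attractor longer than $\Tmax$. Since $\vbd$ is feasible by assumption, the forbidden attractor we built has some length $T\le\Tmax$.

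It then yields a feasible solution of $\SubproblemAt{\vbd}{T}$ with $\phiIndicator=0$, extended to $\LLPModel$ as described after the master problem. Hence the optimal value of the LLP is $0$, and \Cref{eq:ulp-llp-opt} and \Cref{eq:ulp-p-hard} cannot both hold, a contradiction. We therefore conclude that \Cref{eq:trap-space-cut} holds for every feasible $\vbd$, for any $\TargetSize$ and any $\PrevControlSet$, since neither the argument nor the cut depends on them.
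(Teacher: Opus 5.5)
Your proof is correct and follows the paper's argument. From a vanishing left-hand side you extract $u^k_j\le d^k_j$ and condition \Cref{eq:trap-space-condition}, handling the case $u^k_j=1$ via exclusivity. You then apply \Cref{thm:control-preserving-trap-spaces} to keep $\vbh$ a fully forbidden trap space under $\vbd$, and use the sufficiently-large-$\Tmax$ assumption to get a forbidden attractor of length at most $\Tmax$, contradicting \Cref{eq:ulp-p-hard,eq:ulp-llp-opt}; your trajectory argument for an attractor inside $\vbh$ and your explicit use of the definition of a sufficiently large $\Tmax$ only spell out steps the paper states in one line.
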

The formal proof of \Cref{thm:trap-space-cut} is presented in  \Cref{subsec:proof-trap-space-cut}. A feasible control must either (i) release some of the fixed values in
$\vbu$ or (ii) convert some of the fixed values in $\vbh$ to their negated values; otherwise, $\vbh$ remains fully forbidden by \Cref{thm:control-preserving-trap-spaces}. With a sufficiently large $\Tmax$, at least one forbidden attractor is included in the given trap space $\vbh$. The first term of \Cref{eq:trap-space-cut} corresponds to case (i), and the second term corresponds to case (ii).

Based on \Cref{thm:trap-space-cut}, we formulate the \emph{subspace separation} $\SubspaceModel$, an IP that seeks a control $\vbu$ and a corresponding fully forbidden trap space $\vbh$ from which a \tscut{} can be derived. This model is solved for the current candidate control $\vbd$, which appears in \Crefrange{eq:separation-1-implied-coeff-1}{eq:separation-1-implied-coeff-2}. Variable $u^k_j$ is 1 if controllable gene $j$ is fixed to be $k$ by a control, and 0 otherwise ($j \in \CtrlGeneSet, k \in \B$); $h^k_i$ is 1 if gene $i$ is fixed to be $k$ in the trap space, and 0 otherwise ($i \in \GeneSet, k \in \B$).

\paragraph*{\textbf{The subspace separation} $\SubspaceModel$}
%

% \small
\begin{align} 
	\text{min }
	 & \textstyle \sum_{j \in \CtrlGeneSet} \sum_{k \in \B} h^{k}_j                                                                                                                  & \label[objective]{eq:separation-IP-obj}                                                                                                 \\
	\text{ s.t. }
	 & u^{0}_j + u^{1}_j                                         \leq 1                                                                                                          &                              & \forall j \in \CtrlGeneSet\label[constraint]{eq:separation-IP-exclusivity-1}                    \\
	 & h^{0}_i + h^{1}_i                                         \leq 1                                                                                                          &                              & \forall i \in \GeneSet\label[constraint]{eq:separation-IP-exclusivity-2}                    \\
	 & u^k_j                                                     \leq  h^{k}_j                                                                                                   &                              & \forall j \in \CtrlGeneSet, k \in \B\label[constraint]{eq:separation-IP-control}        \\
	 & \textstyle h^k_i                                                     \leq \sum_{i' \in L^+_c} h^1_{i'} + \sum_{i' \in L^-_c} h^0_{i'} & &  \forall i \in \UnctrlGeneSet, k\in \B, c\in C^k_i\label[constraint]{eq:separation-IP-implied-fix-1} \\
	 & \textstyle h^k_j - u^k_j \leq \sum_{i' \in L^+_c} h^1_{i'} + \sum_{i' \in L^-_c} h^0_{i'} & &   \forall j \in \CtrlGeneSet, k\in \B, c\in C^k_j\label[constraint]{eq:separation-IP-implied-fix-2}          \\
	 & h^0_{\phigene}                                                     = 1\label[constraint]{eq:separation-IP-forbidden-trap-space}                                                                                                                                                                                               \\
	 & u^k_j + (1-h^k_j)                                         \geq d^{(1-k)}_j                                                                               &                              & \forall j \in \CtrlGeneSet, k \in \B\label[constraint]{eq:separation-1-implied-coeff-1} \\
	 & u^{k}_j                                                   \leq d^k_j                                                                                                &                              & \forall j \in \CtrlGeneSet, k \in \B\label[constraint]{eq:separation-1-implied-coeff-2} \\
	 & \vbu, \vbh                                    \in \B^{2\numctrlgenes} \times \B^{2\numgenes}\label[constraint]{eq:separation-IP-domain}
\end{align}

\Cref{eq:separation-IP-obj} is equivalent to minimizing the number of nonzero literals in the \tscut{}
$(=\sum_{j \in \CtrlGeneSet}\sum_{k \in \B}(u^k_j + (1 - u^k_j) h^k_j))$
% $ \sum_{j \in \CtrlGeneSet}
% 	\sum_{k \in \B}
% 	(u^k_j + (1 - u^k_j) h^k_j)   = \sum_{j \in \CtrlGeneSet} \sum_{k \in \B} (u^k_j(1 - h^k_j) + h^k_j) = \sum_{j \in \CtrlGeneSet} \sum_{k \in \B} h^k_j$
%
since $(u^k_j = 1) \rightarrow (h^k_j=1)$ by \Cref{eq:separation-IP-control}.
\Cref{eq:separation-IP-exclusivity-1,eq:separation-IP-exclusivity-2} ensure that, for each gene, the control and trap space variables cannot simultaneously take the values 0 and 1.
\Cref{eq:separation-IP-control} fixes the corresponding variable of the selected
trap space if it is controlled. 
Since $f_i$ and $\neg f_i$ are CNFs, they are tautologies (i.e., always true) if and only if every clause is also a tautology. 
\Cref{eq:separation-IP-implied-fix-1} ensures this condition for $f_i$ and $\neg f_i$ if uncontrollable gene $i$ is fixed as 1 and 0 in the selected trap space, respectively.
\Cref{eq:separation-IP-implied-fix-2} serves a similar role for the
controllable genes,
but they are active only when no control is applied.
\Cref{eq:separation-IP-forbidden-trap-space} restricts the selected
trap space to be a fully forbidden trap space.
\Crefrange{eq:separation-1-implied-coeff-1}{eq:separation-1-implied-coeff-2}
provide the conditions under which the derived \tscut{} will remove the candidate $\vbd$. Evaluating \Cref{eq:trap-space-cut} at $\vbd$ yields a left-hand side of 0 due to the following facts.
Given $j\in \CtrlGeneSet, k\in \B$, if
$d^{(1-k)}_j = 1$, then by exclusivity $d^k_j = 0$, and the corresponding term becomes $u^k_j + (1-u^k_j)
	h^k_j$. \Cref{eq:separation-1-implied-coeff-2} forces $u^k_j \leq d^k_j = 0$, reducing the term to $h^k_j$, which is then forced to be 0 by
\Cref{eq:separation-1-implied-coeff-1}. On the other hand, if
$d^k_j = 0$, the corresponding term becomes $u^k_j$, which is
implied to be 0 by \Cref{eq:separation-1-implied-coeff-2}.
Therefore, all terms in \Cref{eq:trap-space-cut} are 0 and $\vbd$
violates the constraint.
Notably, the decision version of the subspace separation is equivalent to deciding
whether $\vbh$ is a trap space if $h^0_{\phigene}=1$ and $\vbd=\vbzero$. This problem is known to be coNP-complete \citep{moon_2024_Computational}.

\begin{theorem}\label{thm:separation-1}
	If $\Tmax$ is sufficiently large and $(\vbu, \vbh) \in \B^{2\numctrlgenes} \times \B^{2\numgenes}$ is a solution to the subspace separation IP~$\SubspaceModel$, then the trap space cut \Cref{eq:trap-space-cut}
	is a valid constraint for all feasible solutions to $\ULPModel$
	for any $\TargetSize \in \mathbb{Z}^+ \cup\{0\}$ and $\PrevControlSet\subseteq \ControlSet$.
	Furthermore,
   this constraint excludes the candidate $\vbd$ from the feasible region.
\end{theorem}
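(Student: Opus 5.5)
The plan is to reduce the first claim to \Cref{thm:trap-space-cut} and to prove the second claim by evaluating the cut at $\vbd$.

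For the reduction, it suffices to show that any solution $(\vbu,\vbh)$ of $\SubspaceModel$ yields a fully forbidden trap space $\vbh$ under a control $\vbu$ satisfying \Cref{eq:ulp-exclusivity}. Exclusivity of $\vbu$ is \Cref{eq:separation-IP-exclusivity-1}, and $\vbh$ is a well-defined subspace by \Cref{eq:separation-IP-exclusivity-2}. For the trap space property under $\vbu$, we take $\vbx$ belonging to $\vbh$ and a gene $i$ with $h^k_i=1$, and show $f^{\vbu}_i(\vbx)=k$. We split into two cases.
\begin{enumerate}[(i)]
\item If $i\in\CtrlGeneSet$ and $u^k_i=1$, the control fixes $i$ to $k$, so the claim holds. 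If instead $u^{1-k}_i=1$, then $h^{1-k}_i=1$ by \Cref{eq:separation-IP-control}, which contradicts \Cref{eq:separation-IP-exclusivity-2}.
\item Otherwise gene $i$ is unfixed by $\vbu$, and \Cref{eq:separation-IP-implied-fix-1} or \Cref{eq:separation-IP-implied-fix-2} (with $u^k_i=0$) applies. For every clause $c\in C^k_i$, some $i'\in L^+_c$ has $h^1_{i'}=1$, or some $i'\in L^-_c$ has $h^0_{i'}=1$. Since $\vbx$ belongs to $\vbh$, that literal is true at $\vbx$, so every clause of the CNF of $f_i$ (if $k=1$) or of $\neg f_i$ (if $k=0$) is satisfied. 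Hence $f_i(\vbx)=k$.
\end{enumerate}
This is the step I expect to require the most care: the direction of implication for the CNF clauses and the treatment of controlled genes. It is, however, a direct clause-by-clause check.

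Full forbiddenness follows from \Cref{eq:separation-IP-forbidden-trap-space}: every $\vbx$ in $\vbh$ has $x_{\phigene}=0$. \Cref{thm:trap-space-cut} then gives validity of \Cref{eq:trap-space-cut} for all feasible solutions to $\ULPModel$ with any $\TargetSize$ and $\PrevControlSet$, when $\Tmax$ is sufficiently large.

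For the exclusion claim, we evaluate each term of \Cref{eq:trap-space-cut} at $\vbd$, following the discussion after the model. Fix $j,k$. If $d^k_j=0$, the term equals $u^k_j$, which is $0$ by \Cref{eq:separation-1-implied-coeff-2}. If $d^k_j=1$, exclusivity of $\vbd$ gives $d^{1-k}_j=0$, so the term equals $(1-u^k_j)h^k_j\,d^{(1-k)}_j=0$. (The case $d^{(1-k)}_j=1$ is handled symmetrically: then $u^k_j=0$ and $h^k_j=0$ by \Cref{eq:separation-1-implied-coeff-1}.) Thus the left-hand side is $0<1$ and $\vbd$ is cut off.
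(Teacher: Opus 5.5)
Your proposal is correct and follows the paper's proof. You show that the constraints of $\SubspaceModel$ make $\vbh$ a fully forbidden trap space under $\vbu$, invoke \Cref{thm:trap-space-cut} for validity, and use the last two constraints to make every term of the cut vanish at $\vbd$. You also write out the clause-by-clause trap-space check that the paper only asserts.

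One wording fix in the exclusion step: when $d^k_j=0$, the term equals $u^k_j$ only if also $d^{(1-k)}_j=0$. Your parenthetical case $d^{(1-k)}_j=1$ is a subcase of $d^k_j=0$, not a symmetric case. In it, $u^k_j=0$ comes from \Cref{eq:separation-1-implied-coeff-2} and then $h^k_j=0$ from \Cref{eq:separation-1-implied-coeff-1}.
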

\begin{proof}{Proof of \Cref{thm:separation-1}}
	Since $(\vbu, \vbh)$ satisfies \Crefrange{eq:separation-IP-exclusivity-1}{eq:separation-IP-forbidden-trap-space}, $\vbh$ is a fully forbidden trap space under control $\vbu$. If $\Tmax$ is sufficiently large, the validity of the \tscut{} in \Cref{eq:trap-space-cut} is a direct consequence of \Cref{thm:trap-space-cut}. Candidate $\vbd$ is cut off by \Crefrange{eq:separation-1-implied-coeff-1}{eq:separation-1-implied-coeff-2}.
	\hfill\Halmos
\end{proof}

The \attrcut{} is the exact baseline because it is derived from a specific forbidden attractor returned by a subproblem. The \tscut{} becomes attractive when a fully forbidden trap space includes many such forbidden attractors and a single aggregated cut can replace several attractor cuts. \Cref{thm:stronger-benders-cut} provides a condition under which the \tscut{} is provably stronger than any \attrcut{} derived from a forbidden attractor within the fully forbidden trap space. In other words, the \tscut{} successfully aggregates various \attrcut{}s satisfying the condition into a stronger cut.

\begin{theorem}[The strength of a trap space cut]\label{thm:stronger-benders-cut}
	Suppose $(\vbu, \vbh) \in \B^{2\numctrlgenes} \times \B^{2\numgenes}$ is a solution to the subspace separation~$\SubspaceModel$.
	Let $(\vbx, \vby, \vbw, 0)$ be an optimal solution to $\SubproblemAt{\vbu}{T}$ such that $\vbx$ belongs to the fully forbidden trap space $\vbh$ under control $\vbu$.
	If the implication $(u^0_j \vee u^1_j)\rightarrow \neg \betaj$ holds for all $j\in \CtrlGeneSet$, the trap space cut \Cref{eq:trap-space-cut} implies the attractor cut \Cref{eq:attractor-cut} derived with $\vbx$.
\end{theorem}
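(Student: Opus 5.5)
The plan is to prove the contrapositive on binary controls satisfying \Cref{eq:ulp-exclusivity}. Suppose such a $\vbd$ violates the attractor cut \Cref{eq:attractor-cut} derived from $\vbx$; I will show it also violates the trap space cut \Cref{eq:trap-space-cut}.

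Both left-hand sides are sums of per-gene terms. Under \Cref{eq:ulp-exclusivity} each $g_j(\vbx,\vbd)$ from \Cref{thm:attractor-characterization} lies in $\{0,1\}$, and each term $u^k_j(1-d^k_j)+(1-u^k_j)h^k_jd^{(1-k)}_j$ is nonnegative. So violating \Cref{eq:attractor-cut} means $g_j(\vbx,\vbd)=0$ for every $j\in\CtrlGeneSet$. It then suffices to show, gene by gene, that $g_j(\vbx,\vbd)=0$ forces $\sum_{k\in\B}\bigl(u^k_j(1-d^k_j)+(1-u^k_j)h^k_jd^{(1-k)}_j\bigr)=0$.

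First I record what membership in $\vbh$ gives. Every state $\vbx_t$ of the attractor belongs to $\vbh$. Hence if $h^k_j=1$, then $x_{j,t}=k$ for all $t$, which gives $\alphaj=1$ and $\kj=k$. Since $\vbx$ is an attractor under $\vbu$ (it is feasible for $\SubproblemAt{\vbu}{T}$), the case $\neg\alphaj\wedge\neg\betaj$ cannot occur, so $g_j$ is always given by one of \Cref{eq:attractor-characterization-1}--\Cref{eq:attractor-characterization-3}.

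The case analysis on $j$ then runs as follows.

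\textbf{Case (a): $u^k_j=1$ for some $k$.} By \Cref{eq:separation-IP-control}, $h^k_j=1$, so $\alphaj=1$ and $\kj=k$. The hypothesis $(u^0_j\vee u^1_j)\rightarrow\neg\betaj$ gives $\betaj=0$. By \Cref{eq:attractor-characterization-2}, $g_j=1-d^k_j$, so $g_j=0$ means $d^k_j=1$. The $k$-term of the trap space cut is then $u^k_j(1-d^k_j)=0$. For the other value $1-k$, exclusivity of $\vbu$ and of $\vbh$ gives $u^{1-k}_j=h^{1-k}_j=0$, so that term also vanishes.

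\textbf{Case (b): $u^0_j=u^1_j=0$.} The $j$-part of the trap space cut reduces to $\sum_k h^k_jd^{(1-k)}_j$. If $h^0_j=h^1_j=0$, it is zero. Otherwise $h^k_j=1$ for exactly one $k$, so again $\alphaj=1$ and $\kj=k$. Then $g_j$ is either $d^{(1-k)}_j$ (if $\betaj=1$) or $1-d^k_j$ (if $\betaj=0$). In the first case $g_j=0$ gives $d^{(1-k)}_j=0$ directly. In the second, $g_j=0$ gives $d^k_j=1$, hence $d^{(1-k)}_j=0$ by exclusivity. Either way the term vanishes.

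I expect the main obstacle to be Case (a). This is exactly where the hypothesis $(u^0_j\vee u^1_j)\rightarrow\neg\betaj$ is needed. Without it, \Cref{eq:attractor-characterization-1} could apply, and $g_j=d^{(1-k)}_j=0$ would not force $d^k_j=1$. The term $u^k_j(1-d^k_j)$ could then equal one while the attractor cut is violated. The remaining work is bookkeeping: making sure membership in $\vbh$ pins down $\alphaj$ and $\kj$ correctly, and that binarity and exclusivity make ``sum equals zero'' equivalent to ``every term is zero''.
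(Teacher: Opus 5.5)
Your proof is correct and follows essentially the paper's argument: the same per-gene case split (controlled by $\vbu$, fixed only in $\vbh$, free in $\vbh$), the same use of $h^k_j=1 \Rightarrow \alphaj=1,\ \kj=k$, and the hypothesis to force $\betaj=0$ in the controlled case. The only difference is framing. The paper proves the per-gene linear inequality \Cref{eq:cut-strength-inequality} directly, with equality in the two nontrivial cases, so its argument also gives the dominance for fractional $\vbd\in[0,1]^{2\numctrlgenes}$. Your contrapositive covers binary controls satisfying \Cref{eq:ulp-exclusivity}, which is exactly the master problem's domain.
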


The formal proof of \Cref{thm:stronger-benders-cut} is presented in \Cref{subsec:proof-stronger-benders-cut}.
As discussed above, the \attrcut{} gets stronger when a controllable gene is consistently updated by the transition
formula for all times.
However, the \tscut{} cannot capture this property, and thus
\Cref{thm:stronger-benders-cut} may not hold if the condition $(u^0_j \vee
	u^1_j)\rightarrow \neg \betaj$ is not true for some
$j\in\CtrlGeneSet$; we present such a counterexample with two controllable genes in \Cref{subsec:counterexample}. Nevertheless, the \tscut{} is much stronger than the \attrcut{} in practice. We provide a numerical comparison of the two Benders cuts in \Cref{sec:computational-experiments}.

\subsection{Repeated IBBD Solving to Enumerate All Minimal Controls}\label{subsec:benders-algorithm}

\begin{algorithm}[t]
	% \TableSpaced
	\small
	\caption{The enumerative IBBD algorithm for \TmaxSACP{}}\label{alg:benders-algorithm}
	\BlankLine{}
	\SetKwInOut{Data}{Input}
	\SetKwInOut{Result}{Output}
	\SetKw{Continue}{continue}
	\SetKw{Break}{break}
	\SetKw{And}{and}
	\SetKw{In}{in}
	\SetKw{Or}{or}
	\SetNoFillComment
	\KwIn{Boolean network $f$, phenotype $\phiFunction$, the maximum attractor length $\Tmax$, and option $\UseTScut\in\{0,1\}$}
	\KwOut{The set of minimal controls $\PrevControlSet$}
	\Begin{
		$(\PrevControlSet, \CutSet) \leftarrow (\emptyset, \emptyset)$\;\label{alg-line:initialization}
		\For{$\TargetSize$ \In $[0, \ldots, \numctrlgenes]$\label{alg:loop-target-size-start}}{
			\While{$\MPModel$ has a feasible solution $\vbd$}{
				\If{$\UseTScut=1$ and $\SubspaceModel$ has a feasible solution $(\vbu,\vbh)$\label{alg-line:subspace-separation-1}}{
					$\CutSet \leftarrow \CutSet \cup \{\text{The trap space cut } \Cref{eq:trap-space-cut} \}$\label{alg:trap-space-cut} \And
					\Continue\;\label{alg-line:subspace-separation-2}
				}
				Find the smallest $T \in \Tmaxrange$ such that $\SubproblemModelAtT$ has an optimal solution with $\phiIndicator = 0$\;\label{alg-line:subproblem}
				\lIf{(Such $T$ exists with optimal solution $(\vbx, \vby, \vbw, 0)$)}{
					$\CutSet \leftarrow \CutSet \cup \{\text{The attractor cut } \Cref{eq:attractor-cut} \}$\label{alg:attractor-cut}}
				\lElseIf{($\SubproblemModelAtT$ infeasible for all $T \in \Tmaxrange$)}{
					$\CutSet \leftarrow \CutSet \cup \{
						\text{The no-good cut } \Cref{eq:no-good-cut}
						\}$\label{alg-line:llp-end}}
				\lElse{
					$\PrevControlSet \leftarrow \PrevControlSet \cup \{\vbd\}$}
					}
		}\label{alg:loop-target-size-end}
		\Return{$\PrevControlSet$}
	}
\end{algorithm}

\Cref{alg:benders-algorithm} enumerates all minimal controls in nondecreasing order of size through \Cref{alg:loop-target-size-start}--\ref{alg:loop-target-size-end}. 
When $\UseTScut=1$, the algorithm first tries the subspace separation $\SubspaceModel$ to obtain a usually stronger \tscut{} (\Cref{alg:trap-space-cut}). If no such \tscut{} is found, or if $\UseTScut=0$, it searches the subproblems $\SubproblemModelAtT$ from small $T$'s and checks whether their optimal value $\phiIndicator$ equals 0. If the optimal solution of some subproblem has $\phiIndicator=0$, the corresponding forbidden attractor generates an \attrcut{} (\Cref{alg:attractor-cut}). If every subproblem is infeasible, then no attractor of length at most $\Tmax$ exists under $\vbd$, so we add the no-good cut, which excludes only the specific candidate $\hat{\vbd}$:
\begin{align}
	\sum_{j\in \CtrlGeneSet}\sum_{k\in \B} \left(\hat{d}^k_j(1-d^k_j) + (1-\hat{d}^k_j)d^k_j\right) \geq 1. \label{eq:no-good-cut}
\end{align}
Otherwise, $\vbd$ is feasible for $\ULPModel$ and is added to $\PrevControlSet$. This algorithm has the following properties.
\begin{itemize}
	\item \textbf{Correctness}: When $\UseTScut=0$, the algorithm exactly solves \TmaxSACP{} via \attrcut{}s and no-good cuts. When $\UseTScut=1$, the same baseline remains and \tscut{}s act as an optional strengthening with sufficiently large $\Tmax$.
	\item \textbf{Termination}: The algorithm terminates in a finite number of runs of the master problem since (i) the number of candidates is finite, and (ii) one or more candidates are removed in each run either by Benders cuts or \Cref{eq:ulp-minimality} generated by a new solution.
\end{itemize}

\section{Computational Experiments}\label{sec:computational-experiments}

\subsection{Experiment Setting}\label{subsec:experiment-settings}

We compare the following methods.

\begin{itemize}
   \item \MibS{}: Exhaustively solving $\ULPModel$ with a bilevel IP solver \MibS{} \citep{tahernejad_2020_Branchandcut} (ver.~1.2) and adding \Cref{eq:ulp-minimality} for minimality; this replaces \Cref{alg:loop-target-size-start}--\ref{alg:loop-target-size-end} in \Cref{alg:benders-algorithm}.

   \item \PBN{}: The model-checking approach of \cite{cifuentes-fontanals_2022_Control}, implemented using the Python package \texttt{pyboolnet}. \PBN{} targets \InfiniteSACP{} and does not rely on a prescribed $\Tmax$.

   \item \BEN{}/\SEP{}: Enumerative IBBD algorithms based on \Cref{alg:benders-algorithm}. \BEN{} sets $\UseTScut=0$ and uses only \attrcut{}s, whereas \SEP{} sets $\UseTScut=1$ and additionally uses \tscut{}s. Both accept two subproblem options for \Cref{alg-line:subproblem}:
   \begin{itemize}
      \item (DEC): Solve the decomposed subproblems $\SubproblemModelAtT$ for $T=1,\ldots,\Tmax$.
      \item (AGG): Solve $\LLPModel$, and add an \attrcut{} if its optimal solution has $\phiIndicator=0$. We also label \MibS{} as AGG since it uses the same model.
   \end{itemize}
\end{itemize}

Our experiments address the following questions: 
\begin{enumerate}[(i)]
\item How do the proposed IBBD algorithms (\BEN{} and \SEP{}) perform compared to \MibS{} and \PBN{}?
\item Does the use of the trap space cut lead to better performance (\SEP{} vs. \BEN{})?
\item Do the decomposed subproblems (DEC) outperform the aggregated subproblems (AGG)?
\item How does the choice of $\Tmax$ affect computational performance and feasibility for \InfiniteSACP{}?
\end{enumerate}

All algorithms enumerate minimal controls in nondecreasing order of their size. IP models are solved with \texttt{Gurobi} (ver. 12.0.0). Since the \tscut{} was originally designed for \InfiniteSACP{}, \BEN{} serves as our exact finite-$\Tmax$ method and \SEP{} as a heuristic finite-$\Tmax$ variant.

We use the eleven BN instances from \cite{moon_2022_Bilevel}, which are biologically relevant disease models with clear phenotype interpretations.
We grouped them by their sizes into small (13--20 genes), medium (28--35 genes), and large (53--75 genes) instances; other statistics are provided in \Cref{appendix:comp-results}. 
Our main performance measure is the number of minimal controls found within 600 seconds, reflecting time-critical medical applications where the discovered controls are passed to further experimental validation. Bold entries indicate that the algorithm certified completeness for all minimal controls of size $\leq 7$.
All experiments were run on a Windows 11 PC with an Intel Core i7-14700KF CPU and 64 GB RAM. Computation time is reported in wall-clock seconds.

\subsection{Results}\label{subsec:results}

\newcommand{\expna}{-}
\newcommand{\ctrlnote}{\multicolumn{3}{c}{\textbf{Algorithm}}      & \multicolumn{11}{c}{\textbf{Instances}} }
\newcommand{\graycmidrules}[1]{%
	\noalign{\vskip-\aboverulesep}%
	\arrayrulecolor{lightgray}#1\arrayrulecolor{black}%
	\noalign{\vskip-\belowrulesep}%
}

\newcommand{\numcontrolnote}{
	\multicolumn{1}{r}{\small (\textbf{bold}):} & \multicolumn{12}{l}{\small The algorithm proved that all minimal controls of size $\leq 7$ were found. }  \\
	\multicolumn{1}{r}{\small ``-'':}           & \multicolumn{12}{l}{\small $\Tmax$ is too small to apply the algorithm.}
}

\begin{table}[t]
	\centering
	% \caption{The number of minimal controls discovered by each algorithm within 600 seconds}\label{tab:count-minimal-controls}
	\TABLE{The number of minimal controls discovered within 600 seconds\label{tab:count-minimal-controls}}{
\small
\begin{tabular}{rrc cccc ccc cccc}
\toprule
\ctrlnote \\
\cmidrule(r){1-3}\cmidrule(l){4-14}
Name & Option & $\Tmax$ & S1 & S2 & S3 & S4 & M1 & M2 & M3 & L1 & L2 & L3 & L4 \\
\midrule\midrule
\multicolumn{14}{l}{\textbf{\emph{Finite $\Tmax$}}} \\
\cmidrule(r){1-3}\cmidrule(l){4-14}
\MibS{} & AGG & 1 & \textbf{12} & \textbf{9} & \textbf{18} & \textbf{9} & 113 & 49 & 53 & 26 [3] & 51 & 70 & 538 \\
\BEN{} & DEC & 1 & \textbf{12} & 9 & \textbf{18} & \textbf{9} & 214 & 8 & 81 & 0 & 60 & 47 & 0 \\
\BEN{} & AGG & 1 & \textbf{12} & 9 & \textbf{18} & \textbf{9} & 218 & 2 & 87 & 0 & 60 & 45 & 0 \\
\SEP{} & DEC & 1 & \textbf{12} & \textbf{9} & \textbf{18} & \textbf{9} & \textbf{370} & 82 & 746 & 576 [187] & \textbf{60} & 550 [8] & 81 \\
\SEP{} & AGG & 1 & \textbf{12} & \textbf{9} & \textbf{18} & \textbf{9} & \textbf{370} & 82 & 746 & 575 [187] & \textbf{60} & 550 [8] & 81 \\
\cmidrule(r){1-3}\cmidrule(l){4-14}
\MibS{} & AGG & 3 & 12 & \textbf{9} & 10 & \textbf{9} & 65 & 0 & 2 & 2 & 20 & 13 & 0 \\
\BEN{} & DEC & 3 & \textbf{12} & 9 & \textbf{18} & \textbf{9} & 203 & 6 & 62 & 0 & 60 & 44 & 0 \\
\BEN{} & AGG & 3 & \textbf{12} & 9 & \textbf{18} & \textbf{9} & 185 & 6 & 56 & 0 & 60 & 41 & 0 \\
\SEP{} & DEC & 3 & \textbf{12} & \textbf{9} & \textbf{18} & \textbf{9} & \textbf{340} & 83 & 645 & 962 [360] & \textbf{60} & 509 [2] & 92 \\
\SEP{} & AGG & 3 & \textbf{12} & \textbf{9} & \textbf{18} & \textbf{9} & \textbf{340} & 80 & 641 & 962 [360] & \textbf{60} & 532 [3] & 92 \\
\cmidrule(r){1-3}\cmidrule(l){4-14}
\MibS{} & AGG & 5 & 12 & 19 & 9 & \textbf{9} & 42 & 1 & 2 & 0 & 18 & 0 & 0 \\
\BEN{} & DEC & 5 & \textbf{12} & 31 & \textbf{18} & \textbf{9} & 204 & 6 & 43 & 0 & 60 & 36 & 0 \\
\BEN{} & AGG & 5 & \textbf{12} & 31 & \textbf{18} & \textbf{9} & 188 & 2 & 43 & 0 & 60 & 37 & 0 \\
\SEP{} & DEC & 5 & \textbf{12} & \textbf{31} & \textbf{18} & \textbf{9} & \textbf{344} & 96 & 704 & 797 & \textbf{60} & 618 & 85 \\
\SEP{} & AGG & 5 & \textbf{12} & \textbf{31} & \textbf{18} & \textbf{9} & \textbf{344} & 95 & 660 & 818 & \textbf{60} & 566 & 80 \\
% \cmidrule(r){1-2}\cmidrule(l){3-13}
% \SEP{} & DEC & 15 & \textbf{12} & \textbf{31} & \textbf{18} & \textbf{9} & \textbf{344} & 97 & 634 & 762 & \textbf{60} & 390 & 73 \\
% \SEP{} & AGG & 15 & \textbf{12} & \textbf{31} & \textbf{18} & \textbf{9} & \textbf{344} & 69 & 558 & 641 & \textbf{60} & 236 & 59 \\
% \BEN{} & DEC & 15 & \textbf{12} & 31 & \textbf{18} & \textbf{9} & 204 & 1 & 45 & 0 & 60 & 29 & 0 \\
% \BEN{} & AGG & 15 & \textbf{12} & 31 & \textbf{18} & \textbf{9} & 180 & 2 & 37 & 0 & 60 & 20 & 0 \\
% \MibS{} & AGG & 15 & 9 & 10 & 0 & 9 & 7 & 1 & 0 & 0 & 0 & 0 \\
\midrule\midrule
\multicolumn{14}{l}{\textbf{\emph{Infinite $\Tmax$}}} \\
\cmidrule(r){1-3}\cmidrule(l){4-14}
\PBN{} &  & $\infty$ & 12 & 17 & 10 & \textbf{9} & 70 & 0 & 3 & 0 & 60 & 8 & 0 \\
% \graycmidrules{\cmidrule(r){1-3}\cmidrule(l){4-14}}
\SEP{} & DEC & 1 & \textbf{12} & 9 $\langle 9 \rangle$ & \textbf{18} & \textbf{9} & 370 $\langle 20 \rangle$ & 82 $\langle 1 \rangle$ & 746 $\langle 12 \rangle$ & 576 & \textbf{60} & 550 $\langle 1 \rangle$ & 81 \\
\SEP{} & DEC & 5 & \textbf{12} & \textbf{31} & \textbf{18} & \textbf{9} & \textbf{344} & 96 & 704 & 797 & \textbf{60} & 618 & 85 \\
\SEP{} & DEC & 15 & \textbf{12} & \textbf{31} & \textbf{18} & \textbf{9} & \textbf{344} & 97 & 634 & 762 & \textbf{60} & 390 & 73 \\
\SEP{} & DEC & 60 & \textbf{12} & \textbf{31} & \textbf{18} & \textbf{9} & 344 & 69 & 221 & 124 & \textbf{60} & 118 & 23 \\
\bottomrule
\end{tabular}
	}
		{
		(\textbf{Bold}): the algorithm proved that all minimal controls of size $\leq 7$ were found.

		[Number] / $\langle$Number$\rangle$: The number of non-minimal / infeasible controls.
	}
\end{table}

\Cref{tab:count-minimal-controls} reports the number of minimal controls found within 600 seconds. Numbers in square brackets ([Number]) and angle brackets ($\langle$\text{Number}$\rangle$) denote non-minimal and infeasible controls, respectively.  Bold entries indicate that the algorithm verified that all minimal controls of size at most 7 were found.
The procedure for checking feasibility and minimality is described in \Cref{appendix:verification}. For finite $\Tmax$, \SEP{} may remove some minimal controls because its \tscut{}s rely on the large-$\Tmax$ assumption; hence, non-minimal controls of a larger size may survive in the final output. For the infinite case, too small a $\Tmax$ can yield infeasible controls. \MibS{} missed very few minimal controls due to numerical error, which explains the three non-minimal controls reported for L1.

\subsubsection{Comparison of the total number of controls found.}\mbox{}

\textbf{(i) For finite $\Tmax$ cases:} We compare \MibS{} with the two IBBD algorithms, \BEN{} and \SEP{}.

The two exact methods, \MibS{} and \BEN{}, outperform each other on different instances when $\Tmax$ is small. At $\Tmax=1$, \BEN{} could not prove completeness of the set of minimal controls in S2, while \MibS{} could. \MibS{} found many more solutions than \BEN{} on L1, L3, and especially L4; for L4, \MibS{} is the best method by far, finding 538 controls. Tables in \Cref{appendix:comp-results} reveal that those instances require too many no-good cuts for \BEN{}. As $\Tmax$ increases, however, this tendency disappears and \BEN{} becomes clearly stronger overall.

Although heuristic, \SEP{} remains very effective for finite $\Tmax$: it consistently found the largest numbers of solutions (except L4 for $\Tmax=1$) and scaled better to large instances. When $\Tmax$ is too small, some minimal controls were removed, so non-minimal controls survived as in L1 and L3 for $\Tmax=1$ and $3$. Still, the heuristic is robust in practice: by $\Tmax=5$, it found hundreds of minimal controls without any infeasible control. 
For both \BEN{} and \SEP{}, DEC tends to outperform AGG especially with high $\Tmax$ values and large instances. This suggests that the IBBD framework remains effective even when the number of subproblems is large.

\textbf{(ii) For the infinite $\Tmax$ case:} We compare \PBN{} with \SEP{} used heuristically by setting $\Tmax$ to a large value. We test $\Tmax\in\{1,5,15,60\}$ and report whether any infeasible controls are found. \Cref{tab:count-minimal-controls} shows that \PBN{} is exact but not scalable because it finds relatively few solutions overall and rarely proves the completeness of the search (except S4). In contrast, \SEP{} again serves as a useful heuristic as it finds far more feasible solutions than \PBN{}. In our experiments, $\Tmax=5$ already produces no infeasible controls while providing hundreds of feasible minimal controls even for large instances. Increasing $\Tmax$ will slow down the search process, but it would reduce the risk of finding infeasible controls and potential non-minimal controls if a longer time limit is allowed.

\subsubsection{Comparison on the progression of the search process.}
\begin{figure}
	\caption{Cumulative number of minimal controls over time for $\Tmax=5$}\label{fig:computation-time-t-max-5}
	\includegraphics[width=\linewidth]{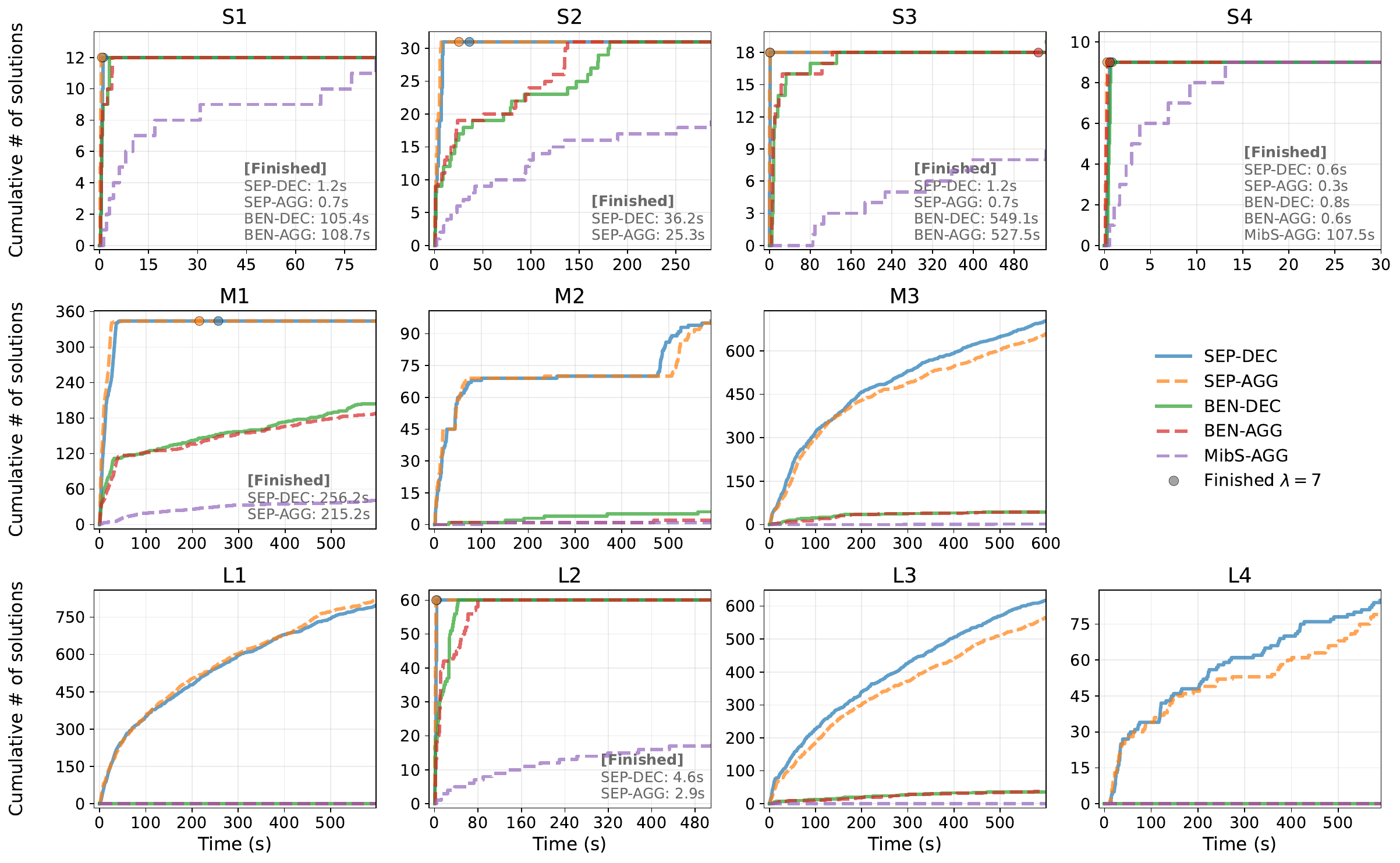}
\end{figure}

\Cref{fig:computation-time-t-max-5} records the cumulative number of minimal controls found over time with $\Tmax=5$, showing how quickly each method makes progress. For algorithms that proved all minimal controls of size $\leq 7$ were found, their completion times are provided. Overall, \SEP{} is the most effective heuristic as it rapidly finds solutions in the early stage and accumulates them steadily even on the harder instances. \SEP{} (DEC) consistently found more solutions than \SEP{} (AGG), especially on difficult medium and large instances (e.g., M2, M3, L1, L3, and L4). However, the aggregated $\LLPModel$ sometimes found more solutions in the very early stage (e.g., M1 and L1) due to its smaller model size and overhead in model building; this observation is much clearer for $\Tmax=60$ as shown in \Cref{appendix:comp-results}.
Therefore, AGG can also be a viable option for time-critical applications.
\MibS{} is generally the weakest method for large $\Tmax$ values, and thus stronger Benders cuts are necessary to solve the problem effectively. \BEN{} often performs competitively on the small instances, but its scalability is limited to a few special cases (M1 and L2). The main benefit of \BEN{} is that it guarantees the minimality of the solutions found, while \SEP{} may not, as we saw in \Cref{tab:count-minimal-controls}.

\subsubsection{Analysis of the strength of Benders cuts.}

The competitiveness of the \SEP{} algorithm stems from its ability to rapidly identify and eliminate infeasible candidates. 
The first half of \Cref{tab:result-benders-statistics} presents the total number of Benders cuts generated by \BEN{} and \SEP{}. \SEP{} could solve S1, S3, S4, and L2 only with dozens of \tscut{}s, whereas \BEN{} required thousands of \attrcut{}s. 
For the remaining instances, employing \tscut{}s either reduced the number of cuts or increased the number of solutions as we observed in \Cref{tab:count-minimal-controls}. 
Furthermore, \tscut{}s are empirically stronger than \attrcut{}s because they contain fewer binary variables, as shown in the second half of \Cref{tab:result-benders-statistics}. Reducing the number of binary variables in a cut can substantially strengthen the restriction imposed on the search, which accelerated \SEP{} compared to \BEN{}.

\newcommand{\newcutstrengthheader}{
\multicolumn{3}{@{}c}{\textbf{Algorithm}} & \multicolumn{11}{c@{}}{\textbf{Instances}} \\ \cmidrule(r){1-3} \cmidrule(l){4-14}
Name & Option & Cuts & S1 & S2 & S3 & S4 & M1 & M2 & M3 & L1 & L2 & L3 & L4
}

\begin{table}[t]
\small
\centering
\TABLE{The statistics of the Benders cuts  $(\Tmax=60)$\label{tab:result-benders-statistics}}{
\begin{tabular}{rrc rrrr rrr rrrr} \toprule
\newcutstrengthheader \\ \midrule
\multicolumn{14}{@{}l}{\textit{\textbf{Total number of cuts}}} \\[0.5em]
\BEN{} & DEC & \attrcut{} & 2126 & 4685 & 5141 & 16 & 3172 & 4217 & 3145 & 2786 & 2594 & 2713 & 2214 \\
\SEP{} & DEC & \tscut{} & 18 & 19 & 11 & 3 & 162 & 134 & 173 & 59 & 18 & 213 & 105 \\
& & \attrcut{} & - & 728 & - & - & 1552 & 2316 & 449 & 135 & - & 724 & 941 \\
\midrule
\multicolumn{14}{@{}l}{\textit{\textbf{Average number of binary variables in a cut}}} \\[0.5em]
\BEN{} & DEC & \attrcut{} & 19.0 & 15.6 & 14.0 & 11.0 & 25.1 & 26.2 & 31.7 & 56.8 & 65.0 & 50.8 & 75.4 \\
\SEP{} & DEC & \tscut{} & 10.9 & 4.1 & 3.4 & 8.0 & 13.3 & 8.0 & 12.9 & 14.6 & 31.3 & 16.0 & 13.3 \\
& & \attrcut{} & - & 18.3 & - & - & 35.8 & 34.4 & 39.1 & 68.2 & - & 67.6 & 105.3 \\
\bottomrule
\end{tabular}}{}
\end{table}

\subsection{Discussion}
We highlight three extensions of the enumerative IBBD algorithm for \TmaxSACP{}:
\begin{itemize}
	\item The heuristic approach \SEP{} for finite $\Tmax$ can be refined by checking subset controls of each returned solution. This would remove surviving non-minimal controls, although it would still not yield an exact method.
	\item We can remove the need for no-good cuts by using the \emph{high point relaxation}, namely the master problem augmented with lower-level feasibility constraints \citep{moore_1990_Mixed}. For $\Tmax=1$, \cite{moon_2022_Bilevel} used this idea in a branch-and-bound implementation.
	\item In some modeling variants, controls inducing no attractor of a prescribed length may be accepted \citep{biane_2026_Why}. \Cref{alg:benders-algorithm} adapts easily to that setting by treating those controls as feasible and omitting no-good cuts.
\end{itemize}

\section{Conclusion}\label{sec:conclusion}

We studied the synchronous attractor control problem for Boolean networks, where the goal is to enumerate all inclusion-wise minimal controls under which every attractor satisfies a given phenotype. To solve this problem, we proposed a bilevel integer programming formulation and developed an infeasibility-based Benders decomposition (IBBD) that derives strong feasibility cuts from forbidden attractors of different lengths.

We also introduced subspace separation, which yields a trap space cut that strengthens one or more attractor cuts under certain conditions. This strengthening is effective when many forbidden attractors are covered by a common fully forbidden trap space, and the computational study shows that the resulting IBBD framework is scalable and practically effective.

Beyond this application, IBBD may be applicable to other bilevel integer programming problems if Benders feasibility cuts can be generated from infeasible solutions of the lower-level problem (see \Cref{appendix:general-ibbd}). 
Our results suggest that IBBD is useful for problems in which coupling constraint violation can be classified into a small number of meaningful categories. If feasible solutions to the original bilevel problem are very sparse, IBBD can be more efficient as it focuses on strong feasibility cuts.

Several directions remain for future work. First, we may develop a method to determine a valid bound on attractor length ($\Tmax$) for a given Boolean network, thereby proving the exactness of trap space cuts.
Another promising direction is to combine the IBBD framework with symbolic methods, such as model checking or constraint programming, to solve the subproblems. As noted in the survey by \cite{hooker_2024_LogicBased}, this hybridization will offer more flexibility in selecting the most suitable method for the subproblems.
Finally, the trap space cut can be applied to controlling other long-term visited states such as asynchronous attractors \citep{rozum_2021_Pystablemotifs,benes_2022_AEON} or minimal trap spaces \citep{pauleve_2023_Marker,riva_2023_Tackling}; we refer to a recent survey by \cite{biane_2026_Why}. This is possible because trap spaces are independent of the update mode \citep{pauleve_2020_Reconciling}.

%\THEEndNotes
% \begingroup \parindent 0pt \parskip 0.0ex \def\enotesize{\normalsize} \theendnotes \endgroup

% Appendix here
% Options are (1) APPENDIX (with or without general title) or
%             (2) APPENDICES (if it has more than one unrelated sections)
% Outcomment the appropriate case if necessary
%
% \begin{APPENDIX}{<Title of the Appendix>}
% \end{APPENDIX}
%
%   or
%
% \begin{APPENDICES}
% \section{<Title of Section A>}
% \section{<Title of Section B>}
% etc
% \end{APPENDICES}

% Acknowledgments here
% \ACKNOWLEDGMENT{The work of KM and KL was supported by the National Research Foundation of Korea (NRF)
% 	grant funded by the Korean government (MSIT, No. NRF-2022R1F1A107414011) and under the framework of international cooperation program managed by National Research Foundation of Korea (No. RS-2023-00259481).
% 	Work of LP was supported by the French Agence Nationale pour la Recherche (ANR) in the scope of the
% 	project ``BNeDiction'' (grant number ANR-20-CE45-0001)
% 	and by Campus France in the scope of the bilateral France-South Korea PHC STAR project number
% 	50147NM.}

% References here (outcomment the appropriate case)

% arXiv mode: inline the pre-built .bbl; otherwise use BibTeX
\ifdefined\arxivmode

\else
  \bibliographystyle{informs2014}
  \bibliography{reference}
\fi

\clearpage
% Restart page numbering for the appendix
\setcounter{page}{1}

\pdfbookmark[0]{Online Appendix}{online-appendix}
\section*{\large Online Supplemental Material for ``A Bilevel Integer Programming Approach for the Synchronous Attractor Control Problem''}
\addcontentsline{toc}{section}{Online Appendix}

\begin{APPENDICES}

	\setcounter{figure}{0}
	\renewcommand{\thefigure}{EC.\arabic{figure}}
	\setcounter{table}{0}
	\renewcommand{\thetable}{EC.\arabic{table}}

	\crefalias{section}{appendix}
	\crefalias{subsection}{appendix}
	\crefalias{subsubsection}{appendix}
	
	\section{Infeasibility-based Benders decomposition for general bilevel IP}\label{appendix:general-ibbd}
	To illustrate the IBBD framework, we consider the following bilevel IP:
	% \paragraph{Bilevel IP}
	\begin{align}
	\text{min }
		& c_w \vbw  \label[objective]{eq:bilevel-example-obj} \\
		\text{s.t. }
		& A \vbw \leq a \label[constraint]{eq:bilevel-example-ulp-1} \\
		& B_w \vbw + B_z \vbz \leq b  \label[constraint]{eq:bilevel-example-ulp-2} \\
		& \vbz \in \argmin_{\tilde{\vbz} \in \mathcal{Z}}
		\{c_z \tilde{\vbz} : D_w \vbw + D_z \tilde{\vbz} \leq d\} \label[constraint]{eq:bilevel-example-llp} \\
		& \vbw \in \mathcal{W}, \quad \vbz \in \mathcal{Z} \label[constraint]{eq:bilevel-example-domain}
	\end{align}
	The upper-level and the lower-level variables are $\vbw$ and $\vbz$, respectively. \Cref{eq:bilevel-example-obj} is the upper-level objective. If lower-level variables appear in the upper-level objective, we reformulate those terms as constraints by introducing an auxiliary upper-level variable and absorb them into \Cref{eq:bilevel-example-ulp-2}. \Cref{eq:bilevel-example-ulp-1} and \Cref{eq:bilevel-example-ulp-2} are the upper-level constraints, where the latter one is the coupling constraint. \Cref{eq:bilevel-example-llp} is the lower-level optimality condition. Finally, \Cref{eq:bilevel-example-domain} defines the domains of $\vbw$ and $\vbz$, which we assume bounded and integral for simplicity.

	We construct the IBBD from \Cref{eq:bilevel-example-obj} and \Crefrange{eq:bilevel-example-ulp-1}{eq:bilevel-example-domain} as follows.
	The \emph{master problem} is obtained by relaxing \Cref{eq:bilevel-example-ulp-2}
	and the lower-level optimality requirement \Cref{eq:bilevel-example-llp}, yielding
	\begin{align}
		\text{min } & c_w \vbw  \\
		\text{s.t. } &  A \vbw \leq a\\
		& \vbw \in \mathcal{W} \\
		& \text{(Feasibility cuts)} 
	\end{align}
	Notably, to prevent lower-level infeasibility due to \Cref{eq:bilevel-example-llp}, high point relaxation \citep{moore_1990_Mixed} may replace the master problem.

	Given a candidate solution $\vbw^*$ from the master problem, the \emph{subproblem} checks whether there exists a lower-level optimal response that satisfies the coupling
	constraints. Specifically, we solve the following lexicographic optimization: 
	\begin{align}
		\operatorname{lex}\min_{\vbz,\Delta} \quad & (c_z \vbz, \ \Delta) \\
		\text{s.t. } & D_w \vbw^* + D_z \vbz \leq d \\
		& B_w \vbw^* + B_z \vbz \leq b + \Delta \cdot \vbone\\
		& \vbz \in \mathcal{Z}, \quad \Delta \in \mathbb{R}_{+}
	\end{align}
	% $\Delta$ to equal to the minimum slack for the coupling constraints. 
	The primary objective ensures lower-level optimality, and the secondary objective then finds the minimum slack $\Delta$ required by any lower-level optimal solution. If the optimal $\Delta$ is positive, then every lower-level optimal solution violates at least one coupling constraint for $\vbw^*$. Then, a problem-dependent feasibility cut should be generated.
	% % FOR PESSIMISTIC
	% 
	% \begin{align}
	% 	\operatorname{lex}\min_{\vbz} \quad & (e \vbz, \ \Delta) \\
	% 	\text{s.t. } & C \vbw^* + D \vbz \leq d \\
	% 	& (B_0 \vbw^* + B \vbz)_i + \Delta \leq b_i  & \quad \forall i = 1, \ldots, m \\
	% 	& (B_0 \vbw^* + B \vbz)_i + \Delta \geq b_i  - M(1 - \alpha_i) & \quad \forall i = 1, \ldots, m \\
	% 	& \sum_{i=1}^m \alpha_i = 1 \\
	% 	& \vbz \in \mathcal{Z}, \quad \Delta \in \mathbb{R}, \quad \boldsymbol{\alpha} \in \{0,1\}^m
	% \end{align}
	% For a given $\vbz$, the binary vector $\boldsymbol{\alpha}$ selects 
	% one coupling constraint, and the constraints force $\Delta$ to equal 
	% its slack while remaining at most the slack of every other constraint. 
	% Hence, $\Delta$ equals the minimum slack across all coupling constraints. 
	% The primary objective ensures lower-level optimality, and the secondary 
	% objective then finds the solution with the smallest minimum slack. 
	% If the optimal $\Delta$ is negative, there exists a lower-level optimal solution that 
	% violates at least one coupling constraint for $\vbw^*$, and a problem-dependent feasibility cut should be generated.
	% 

	\section{Proofs}

	%%%
	%%%

	%%%
	%%%

	\subsection{Proof of \Cref{thm:attractor-characterization}}\label{subsec:proof-attractor-characterization}
	\begin{proof}{Proof of \Cref{thm:attractor-characterization}}
		Suppose a control $\vbd$ satisfying
		\Cref{eq:ulp-exclusivity}. We prove this theorem by showing that the condition $\bigwedge_{j \in \CtrlGeneSet}g_j(\hat{\vbx},\vbd)
			= 0$ is equivalent to the remaining constraints of
		$\SubproblemModelAtT$, which is the if-and-only-if condition for $\hat{\vbx}$ to be an attractor under control $\vbd$.
		Since $(\hat{\vbx},\hat{\vby},\hat{\vbw},\hat{\phiIndicator})$ already satisfy \Crefrange{eq:llp-uncontrollable-1}{eq:llp-domain}, which are independent of $\vbd$, it is 
		sufficient to show the equivalence with the remaining
		\Crefrange{eq:llp-fix}{eq:llp-con-2}.
		Based on the value of $\kj$, we can rephrase the
		\Crefrange{eq:llp-fix}{eq:llp-con-2}
		as \Cref{eq:attractor-condition-rephrase-1}
		\begin{align}
			 & \bigwedge_{j\in \CtrlGeneSet}\left(
			\left(
				d^{\kj}_j \rightarrow \bigwedge_{t \in \Tmaxrange} \sigma(\kj) x_{j,t}
			\right) 
			\wedge \left(
				d^{(1-\kj)}_j \rightarrow \bigwedge_{t \in \Tmaxrange} \sigma(1-\kj) x_{j,t}
				\right)
			\wedge \left(\left(\neg d^{\kj}_j \wedge \neg d^{(1-\kj)}_j\right) \rightarrow \betaj  \right) 
			\right)\label{eq:attractor-condition-rephrase-1}
		\end{align}
		where $\sigma(\kappa)$ denotes the sign function that gives a negation string
		``$\neg$'' if $\kappa = 0$ and an empty string if $\kappa = 1$.
		The first two clauses jointly describe
		\Cref{eq:llp-fix} (controlled case) and the last clause describes \Crefrange{eq:llp-con-1}{eq:llp-con-2} (uncontrolled case).
		Here, we have $\bigwedge_{t \in \Tmaxrange} \sigma(\kj) x_{j,t} = \alphaj$ and
		$\bigwedge_{t \in \Tmaxrange} \sigma(1-\kj) x_{j,t} = 0$ since we already know
		that state $x_{j,1}$ has value $\kj$. Hence, we can
		reformulate \Cref{eq:attractor-condition-rephrase-1} as \Cref{eq:attractor-condition-rephrase-2}
		\begin{align}
			 & 
			\left(\neg d^{\kj}_j \vee \alphaj\right)
			\wedge \left(\neg d^{(1-\kj)}_j\right)
			\wedge \left(d^{\kj}_j \vee d^{(1-\kj)}_j \vee \betaj\right)
			\quad \forall j\in \CtrlGeneSet\label{eq:attractor-condition-rephrase-2}
		\end{align}
		For each $j\in \CtrlGeneSet$, we show the equivalence of
		$g_j(\hat{\vbx},\vbd)$
		in \Crefrange{eq:attractor-characterization-1}{eq:attractor-characterization-3}
		to the terms of \Cref{eq:attractor-condition-rephrase-2} as follows.
		\begin{enumerate}[(i)]
			\item If $\alphaj \wedge \betaj=1$:

			      \Cref{eq:attractor-condition-rephrase-2} can be
			      simplified as $\neg d^{(1-\kj)}_j$. This is true if and only if
			      $g_j(\hat{\vbx},\vbd) = d^{(1-\kj)}_j =0$, as
			      in \Cref{eq:attractor-characterization-1}. This implies that a control must
			      not fix a controllable gene to have its negated value if it has the same value
			      at all times following the transition formula.
			\item If $ \alphaj \wedge\neg \betaj=1$:

			      \Cref{eq:attractor-condition-rephrase-2}
			      can be simplified as $(d^{\kj}_j \vee d^{(1-\kj)}_j) \wedge \neg d^{(1-\kj)}_j
				      = d^{\kj}_j \wedge \neg d^{(1-\kj)}_j$. From the exclusivity constraint,
			      $d^{\kj}_j \rightarrow \neg d^{(1-\kj)}_j$ holds,
			      so $d^{\kj}_j \wedge \neg d^{(1-\kj)}_j$ simplifies to $d^{\kj}_j$
			      and \Cref{eq:attractor-condition-rephrase-2} can be further simplified as $d^{\kj}_j$.
			      Therefore, \Cref{eq:attractor-condition-rephrase-2} is true if
			      and only if $ g_j(\hat{\vbx},\vbd) = (1- d^{\kj}_j) = 0$, as
			      in \Cref{eq:attractor-characterization-2}. This implies that a control must
			      fix a controllable gene to have the current value if it has the same value at
			      all times but does not follow the transition formula.
			\item If $\neg \alphaj \wedge \betaj=1$:

			      \Cref{eq:attractor-condition-rephrase-2}
			      can be simplified as $\neg d^{\kj}_j \wedge \neg d^{(1-\kj)}_j$. This is true
			      if and only if $g_j(\hat{\vbx},\vbd) = d^0_j+d^1_j = 0$, as
			      in \Cref{eq:attractor-characterization-3}. This implies that a control must
			      not be applied if a controllable gene can take both 0 and 1 over time.
			\item If $\neg \alphaj \wedge\neg \betaj=1$:

			    Since $(1-\alphaj)(1-\betaj)=0$ for every attractor solution, this case never arises and can be disregarded.
		\end{enumerate}
		This completes the proof.
		\hfill\Halmos
	\end{proof}

	%%%
	%%%
	\subsection{Proof of \Cref{thm:attractor-cut}}\label{subsec:proof-attractor-cut}
	\begin{proof}{Proof of \Cref{thm:attractor-cut}}
		Since $\SubproblemAt{\hat{\vbd}}{T}$ has the optimal solution $(\hat{\vbx},\hat{\vby},\hat{\vbw},0)$ with the objective value of 0, phenotype condition in \Cref{eq:ulp-p-hard} is violated. Therefore, the same solution should not be feasible under any feasible solution of $\ULPModel$.
		According to \Cref{thm:attractor-characterization}, this condition is the same
		as $\sum_{j \in \CtrlGeneSet} g_j(\hat{\vbx},\vbd) \geq 1$, which is indeed
		identical to \Cref{eq:attractor-cut} due to the following reason.
		Converting the
		case definitions in \Crefrange{eq:attractor-characterization-1}{eq:attractor-characterization-3},
		$\sum_{j \in \CtrlGeneSet} g_j(\hat{\vbx},\vbd) \geq 1$ is equivalent to
		\begin{align*}
			 & \sum_{j \in \CtrlGeneSet} \left( \alphaj\betaj  d^{(1-\kj)}_j
			+ (1-\alphaj)\betaj(d^{\kj}_j+d^{(1-\kj)}_j)  + \alphaj(1-\betaj) (1-d^{\kj}_j)                                                \right) \geq 1            \\
		 & \Leftrightarrow \sum_{j \in \CtrlGeneSet} \left( \betaj  d^{(1-\kj)}_j
		+ (1-\alphaj)\betaj d^{\kj}_j  + \alphaj(1-\betaj) (1-d^{\kj}_j)  \right) \geq 1 \\
		& (\because (1-\alphaj)(1-\betaj)=0 \text{ for any attractor solution}) \notag \\
			 & \Leftrightarrow \sum_{j \in \CtrlGeneSet} \left( \betaj  d^{(1-\kj)}_j
			+ (1-\alphaj) d^{\kj}_j 
			+ (1-\betaj) (1-d^{\kj}_j)  \right) \geq 1
		\end{align*}
		which can be simplified as \Cref{eq:attractor-cut}. It is the if-and-only-if condition for a control that does not induce $\hat{\vbx}$ as an attractor; therefore, the infeasible candidate $\hat{\vbd}$ is removed by this constraint. This completes the proof.
		\hfill\Halmos
	\end{proof}
	\subsection{Proof of \Cref{thm:trap-space-cut}}\label{subsec:proof-trap-space-cut}
	\begin{proof}{Proof of \Cref{thm:trap-space-cut}}
		Suppose $\vbd$ is a feasible solution to $\ULPModel$ for some $\TargetSize$ and $\PrevControlSet$, but it does not satisfy \Cref{eq:trap-space-cut}. This implies the left-hand side is 0. Then, $u^k_j \leq d^k_j$ for all $j\in \CtrlGeneSet, k \in \B$ because $1-d^{k}_j$ in the first term must be 0 whenever $u^{k}_j=1$. In the second term, $(d^{(1-k)}_j=1) \rightarrow (d^k_j=0) \rightarrow (u^k_j=0)$ due to \Cref{eq:ulp-exclusivity} and the previous result. Therefore, for the second term to be 0, $(d^{(1-k)}_j=1) \rightarrow (h^k_j=0)$ must hold, which is the contrapositive of condition \Cref{eq:trap-space-condition} in \Cref{thm:control-preserving-trap-spaces}.
		Then, we can apply \Cref{thm:control-preserving-trap-spaces} to deduce that
		$\vbh$ is still a trap space under control $\vbd$.
		Moreover, since $\phigene \in \UnctrlGeneSet$, the phenotype evaluation is unaffected by any control, so $x_{\phigene}=0$ still holds for every state belonging to $\vbh$. Hence $\vbh$ remains a fully forbidden trap space under control $\vbd$.
		Since every trap space includes at least one attractor, a forbidden attractor must appear as an
		optimal solution of $\SubproblemModelAtT$ for some $T\in \Tmaxrange$
		assuming that $\Tmax$ is sufficiently large. Therefore, $\vbd$ cannot
		be a feasible solution to $\ULPModel$, leading to a
		contradiction of the initial hypothesis. This property does not depend on $\TargetSize$ and
		$\PrevControlSet$, and thus \Cref{eq:trap-space-cut} is valid for all feasible
		solutions of $\ULPModel$ for all $\TargetSize$ and $\PrevControlSet$.
		\hfill\Halmos
	\end{proof}

	\subsection{Proof of \Cref{thm:stronger-benders-cut}}\label{subsec:proof-stronger-benders-cut}
	%%%
	%%%

	\begin{proof}{Proof of \Cref{thm:stronger-benders-cut}.}
		\Cref{eq:trap-space-cut} and \Cref{eq:attractor-cut} 
        % \kds{consist of single terms that are either binary variables or their negations.} 
        can be decomposed into terms consisting of either a single variable or its negation. We show that each term of \Cref{eq:trap-space-cut} corresponding to $j \in \CtrlGeneSet$ implies the one for \Cref{eq:attractor-cut}, which is equivalent to
		\begin{align}
			 & \sum_{k \in \B} \left(u^k_j(1- d^{k}_j) + (1-u^k_j) h^k_j d^{(1-k)}_j \right) \leq \left((1-\alphaj) d^{\kj}_j
			+ \betaj  d^{(1-\kj)}_j
			+  (1-\betaj) (1-d^{\kj}_j)  \right). \label{eq:cut-strength-inequality}
		\end{align}
		Three cases may happen.
		\begin{enumerate}[(i)]
			\item If $\exists k\in \B$ such that
			      $(u^k_j,u^{(1-k)}_j)=(h^k_j,h^{(1-k)}_j)=(1,0)$:

			      In this case, $\alphaj=1$ and $\kj=k$ because all values of $x_{j,t}$
			      are fixed to be $k$ by trap space $\vbh$. The left-hand side gives
			      $(1-d^k_j)$ and the right-hand side gives $(\betaj d^{(1-k)}_j +
				      (1-\betaj)(1-d^k_j))$. Since $u^k_j=1$, the hypothesis of
			      \Cref{thm:stronger-benders-cut} implies $\betaj=0$. Therefore, the
			      right-hand side reduces to $(1-d^k_j)$.
			      Hence, the left-hand side equals the right-hand side.
			\item If $\exists k\in \B$ such that $(u^k_j,u^{(1-k)}_j)=(0,0)$ and
			      $(h^k_j,h^{(1-k)}_j)=(1,0)$:

			      Similar to case (i), $\alphaj=1$ and $\kj=k$ hold. Since $u^k_j=u^{(1-k)}_j=0$,
			      gene $j$ is uncontrolled in $\SubproblemAt{\vbu}{T}$, so
			      \Crefrange{eq:llp-con-1}{eq:llp-con-2} force gene $j$ to
			      follow the transition formula, giving $\betaj=1$. The left-hand side gives
			      $d^{(1-k)}_j$ and the right-hand side gives $\betaj d^{(1-k)}_j = d^{(1-k)}_j$.
			      Hence, the left-hand side equals the right-hand side.
			\item Otherwise:

			      \Crefrange{eq:separation-IP-exclusivity-1}{eq:separation-IP-control}
			      imply $(u^k_j,u^{(1-k)}_j)=(h^k_j,h^{(1-k)}_j)=(0,0)$
			      for all remaining cases. The left-hand side becomes 0, and thus it is less than
			      or equal to the right-hand side.
		\end{enumerate}
		Hence, the \tscut{} \Cref{eq:trap-space-cut} implies the \attrcut{} \Cref{eq:attractor-cut} derived with $\vbx$.
		\hfill\Halmos
	\end{proof}

	%%%
	%%%

	%%%
	%%%
	\section{Counterexample: a trap space cut may not be stronger than an attractor cut}\label{subsec:counterexample}
	Consider control
	$\vbu$ and the fully forbidden trap space $\vbh$ given by the assignment
	$(u^0_1,u^1_1,u^0_2,u^1_2)=(h^0_1,h^1_1,h^0_2,h^1_2)=(0,0,0,1).$
	In this example, $(u^0_j \vee u^1_j)\rightarrow \neg \betaj$ does not hold, so \Cref{thm:stronger-benders-cut} does not apply.
	The corresponding trap space cut is $(1-d^1_2) \geq 1$ while the attractor $\{(11)\}$
	belonging to $\vbh$ gives attractor cut $(d^0_1+d^0_2 \geq 1)$ because
	all $\alphaj$ and $\betaj$ are 1. Candidate control $\vbd=\vbzero$
	satisfies the trap space cut, but it violates the attractor cut.
\begin{figure}[t]

\FIGURE{
\subfloat[Inputs\label{subfig:example-cut-strength-transition-formula}]{%
	\parbox[b]{0.30\linewidth}{
		\centering
		\SingleSpacedXI
		\small
		\begin{tabular}{r@{}l}
			\toprule
			\multicolumn{2}{l}{\textit{\textbf{The transition formulas}}} \\
			\midrule
			$f_1(\vbx)$ & \, $= x_1$ \\
			$f_2(\vbx)$ & \, $= (\neg x_1 \vee x_2) \wedge (x_1 \vee \neg x_2)$ \\
			\midrule
			\multicolumn{2}{l}{\textit{\textbf{The phenotype}}} \\
			\midrule
			$\phiFunction(\vbx)$ & \, $= \neg x_2$ \\
			\bottomrule
		\end{tabular}
		\vspace{0pt}
	}
}\hfill
\subfloat[STG (no control)\label{subfig:example-cut-strength-stg}]{%
	\parbox[b]{0.25\linewidth}{
		\centering
		\begin{tikzpicture}[scale=1]
			\node at (0,0) [attractor_fill] {};
			\node at (2,0) [attractor_fill] {};
			\node at (0,2) [attractor_fill] {};
			\node at (2,2) [attractor_fill] {};
			\draw[] (0,0) node(00) {00};
			\draw[] (2,0) node(10) {10};
			\draw[] (0,2) node(01) {01};
			\draw[] (2,2) node(11) {11};
			\draw[densely dashed,gray,thick] (0-\exgapL,0-\exgapL) rectangle (0+\exgapL,2+\exgapL);
			\draw[densely dashed,gray,thick] (2-\exgapL,2-\exgapL) rectangle (2+\exgapL,2+\exgapL);
			\draw[densely dashed,gray,thick] (2-\exgapL,0-\exgapL) rectangle (2+\exgapL,0+\exgapL);
			\draw[thick_arc] (00) .. controls +(right:1) and +(right:1) .. (01);
			\draw[thick_arc] (01) .. controls +(left:1) and +(left:1) .. (00);
			\draw[thick_arc] (10) +(up:\exgap) arc (-100:230:2.5mm);
			\draw[thick_arc] (11) +(up:\exgap) arc (-100:230:2.5mm);
		\end{tikzpicture}
	}
}\hfill
\subfloat[STG (control $\vbu$ fixing $x_2=1$)\label{subfig:example-cut-strength-stg-control}]{%
	\parbox[b]{0.26\linewidth}{
		\centering
		\begin{tikzpicture}[scale=1]
			\node at (0,2) [attractor_fill] {};
			\node at (2,2) [attractor_fill] {};
			\draw[] (0,0) node(00) {00};
			\draw[] (2,0) node(10) {10};
			\draw[] (0,2) node(01) {01};
			\draw[] (2,2) node(11) {11};
			\draw[densely dashed,gray,thick] (0-\exgapL,2-\exgapL) rectangle (0+\exgapL,2+\exgapL);
			\draw[densely dashed,gray,thick] (2-\exgapL,2-\exgapL) rectangle (2+\exgapL,2+\exgapL);
			\draw[densely dashed,orange,thick] (0-\exgapLL,2-\exgapLL) rectangle (2+\exgapLL,2+\exgapLL);
			\draw[normal_arc] (00) -- (01);
			\draw[thick_arc] (01) +(up:\exgap) arc (-100:230:2.5mm);
			\draw[normal_arc] (10) -- (11);
			\draw[thick_arc] (11) +(up:\exgap) arc (-100:230:2.5mm);
			\draw[densely dashed,gray,thick,draw opacity=0.0] (2-\exgapL,0-\exgapL) rectangle (2+\exgapL,0+\exgapL);
		\end{tikzpicture}
	}
}\hfill
\begin{minipage}[b]{0.14\linewidth}
	\vspace{0pt}
	\centering
	\small
	\SingleSpacedXII
	\begin{tikzpicture}
		\SingleSpacedXII
		\small
		\begin{scope}[local bounding box=legendBox]
			% Attractor
			\node[legendBoxStyle, fill=gray!30, text=black] at (0,0) {Attractor};
			% Trap space
			\node[legendBoxStyle, dashed, gray, text=black] at (0,-1) {Trap space};
			% Control
			\node[legendBoxStyle, dashed, orange, text=black] at (0,-2) {Control};
		\end{scope}
		% Surrounding gray box
		% \node[draw, gray!50, fit=(legendBox), inner sep=8pt] {};
	\end{tikzpicture}
\end{minipage}
}
{A counterexample showing that the trap space cut may not be stronger than the attractor cut.\label{fig:cut-strength-counterexample}}{}

\end{figure}

\section{Computational results}\label{appendix:comp-results}

\subsection{Benchmark instances}

\begin{table}[t]
	\centering
	\TABLE{Summary of data sets\label{tab:data_summary}}{
		\begin{tabular}{l rrrr rrr rrrr} \toprule
			                             & \multicolumn{4}{c}{\textbf{Small}} & \multicolumn{3}{c}{\textbf{Medium}} & \multicolumn{4}{c}{\textbf{Large}}                                             \\ \cmidrule(lr){2-5} \cmidrule(lr){6-8} \cmidrule(lr){9-12}
			                             & S1                                 & S2                                  & S3                                 & S4 & M1 & M2  & M3  & L1  & L2 & L3 & L4  \\ \midrule
			\multicolumn{12}{l}{\emph{\textbf{Input statistics}}} \\[0.2em]
			Total number of genes $\numgenes$                  & 20                                 & 17                                  & 18                                 & 13 & 28 & 32  & 35  & 59  & 66 & 53 & 75  \\
			Number of controllable genes $\numctrlgenes$              & 19                                 & 15                                  & 13                                 & 11 & 25 & 26  & 31  & 56  & 60 & 50 & 73  \\
			Number of clauses in $\ClauseSet^1$            & 39                                 & 25                                  & 33                                 & 21 & 45 & 120 & 103 & 112 & 87 & 93 & 116 \\
			Number of clauses in $\ClauseSet^0$            & 34                                 & 21                                  & 25                                 & 17 & 35 & 75  & 70  & 96  & 84 & 92 & 109 \\
			\midrule
			\multicolumn{12}{l}{\emph{\textbf{Maximum forbidden attractor length}}} \\[0.2em]
			Found by an IP solver (3600s)$^{\dagger}$ & \textbf{3} & \textbf{4} & \textbf{2} & \textbf{12} & 16 & 13 & 44 & 36 &45 & 48 & 40 \\
			Observed during experiments & 1                                  & 4                                   & 1                                  & 1  & 4  & 6   & 20  & 4   & 1  & 40 & 19  \\ 
			\bottomrule
		\end{tabular}}{
		$^{\dagger}$Obtained from the IP model \eqref{ip:max-forbidden-attractor-obj}--\eqref{ip:max-forbidden-attractor-domain} with $\Tmax=100$ and control-size bounded above as $\TargetSizeMax=7$. 
		
		Bold values indicate the optimum was found.
	}
\end{table}

\Cref{tab:data_summary} summarizes the eleven benchmark instances used in our experiments.
Columns $\numgenes$ and $\numctrlgenes$ represent the total numbers
of genes and controllable genes, respectively. Columns $|\ClauseSet^1|$
and $|\ClauseSet^0|$ represent the total number of clauses in the CNFs of
$f_i$ and $\neg f_i$, respectively. We also report the maximum forbidden attractor lengths observed empirically during the experiments. The first row of this part is obtained by solving an IP model described in \Cref{appendix:verification} with a time limit of 3600 seconds. Although those values seem to be large, the actual maximum forbidden attractor lengths observed during the experiments are much smaller as the second row indicates. Those long forbidden attractors are rare, which supports the effectiveness of the IBBD framework.

\subsection{Verification of feasibility, minimality, and maximum forbidden attractor length}\label{appendix:verification}

\paragraph{For finite-$\Tmax$:}
We independently re-check the feasibility of each reported control by solving the corresponding subproblems for all lengths up to $\Tmax=100$. To verify minimality, for every reported control $\vbd$ we enumerate all strict subsets of $\vbd$ and test each subset in the same way; if any subset is feasible, then $\vbd$ is classified as non-minimal. 

\paragraph{For infinite-$\Tmax$:}
Feasibility is verified with model checking code, which was adapted from \cite{cifuentes-fontanals_2022_Control}. This method exactly determines whether the controlled network admits a forbidden attractor regardless of the length. This procedure is more expensive than solving the subproblems for finite $\Tmax$; hence, we did not check minimality for infinite-$\Tmax$ cases.

To reduce repeated work, each checked control is stored with two cached results:
(i) the minimum length of any attractor, (ii) the minimum length of an attractor violating the phenotype condition. Using these results, we can immediately determine the feasibility of any control for any $\Tmax$. 

\paragraph{Checking the maximum forbidden attractor length via IP:}
To jointly determine the maximum forbidden attractor length under a particular control, we use the following IP model.
In these runs, we set the maximum control size to $\TargetSizeMax=7$.

\begin{align}
	\max_{\vbd,\vbx,\vby,\vbw,\phiIndicator,\mathbf{q},\boldsymbol{\delta}} \quad & \sum_{t\in\Tmaxrange} t\cdot w_t \label[objective]{ip:max-forbidden-attractor-obj}\\
	\text{s.t. \quad }& \text{(Lower-level) \Crefrange{eq:llp-fix}{eq:llp-domain}} \notag \\
	& \text{(Upper-level) \Cref{eq:ulp-exclusivity}}\notag \\
	& \phiIndicator = 0 \label[constraint]{ip:max-forbidden-attractor}\\
	& \textstyle \sum_{j\in\CtrlGeneSet}\sum_{k\in\B} d_j^k \le \TargetSizeMax
	\label[constraint]{ip:max-forbidden-attractor-control-size}\\
	& q_t = \textstyle\sum_{r=t+1}^{\Tmax} w_r
	&& \forall t\in \Tmaxrange\setminus\{\Tmax\}
	\label[constraint]{ip:max-forbidden-attractor-q-def}\\
	& \delta_{i,t}\ge x_{i,t}-x_{i,1}, \quad
	  \delta_{i,t}\ge x_{i,1}-x_{i,t}
	&& \forall i\in\GeneSet,\ t\in \Tmaxrange
	\label[constraint]{ip:max-forbidden-attractor-delta-lb}\\
	& \delta_{i,t}\le x_{i,t}+x_{i,1}, \quad
	  \delta_{i,t}\le 2-x_{i,t}-x_{i,1}
	&& \forall i\in\GeneSet,\ t\in \Tmaxrange
	\label[constraint]{ip:max-forbidden-attractor-delta-ub}\\
	& \textstyle \sum_{i\in\GeneSet}\delta_{i,t}\ge q_{t-1}
	&& \forall t\in \Tmaxrange\setminus\{1\}
	\label[constraint]{ip:max-forbidden-attractor-subcycle}\\
	& \mathrlap{
		\mathbf{q}\in\B^{\Tmax-1}, \qquad
		\boldsymbol{\delta} \in\B^{\numgenes\times\Tmax}}
	\label[constraint]{ip:max-forbidden-attractor-domain}
\end{align}

Here, $q_t=1$ indicates that the selected attractor length exceeds $t$, and
$\delta_{i,t}=1$ indicates that the state of gene $i$ at time $t$ differs from
that at time $1$. \Cref{ip:max-forbidden-attractor-q-def}
encodes $q_t=\sum_{r=t+1}^{\Tmax} w_r$, and
\Crefrange{ip:max-forbidden-attractor-delta-lb}{ip:max-forbidden-attractor-subcycle}
prevent an earlier repetition of the first state. Hence, if $w_t=1$, the model
selects an attractor whose exact length is $t$, not merely a multiple of a
shorter cycle.

\clearpage
\subsection{Full experiment results}
% \subsubsection{The progression of the search process}\mbox{}

\begin{figure}[ht]
	\caption{Cumulative number of minimal controls over time for $\Tmax=1$}\label{fig:computation-time-t-max-1}
	\includegraphics[width=0.9\linewidth]{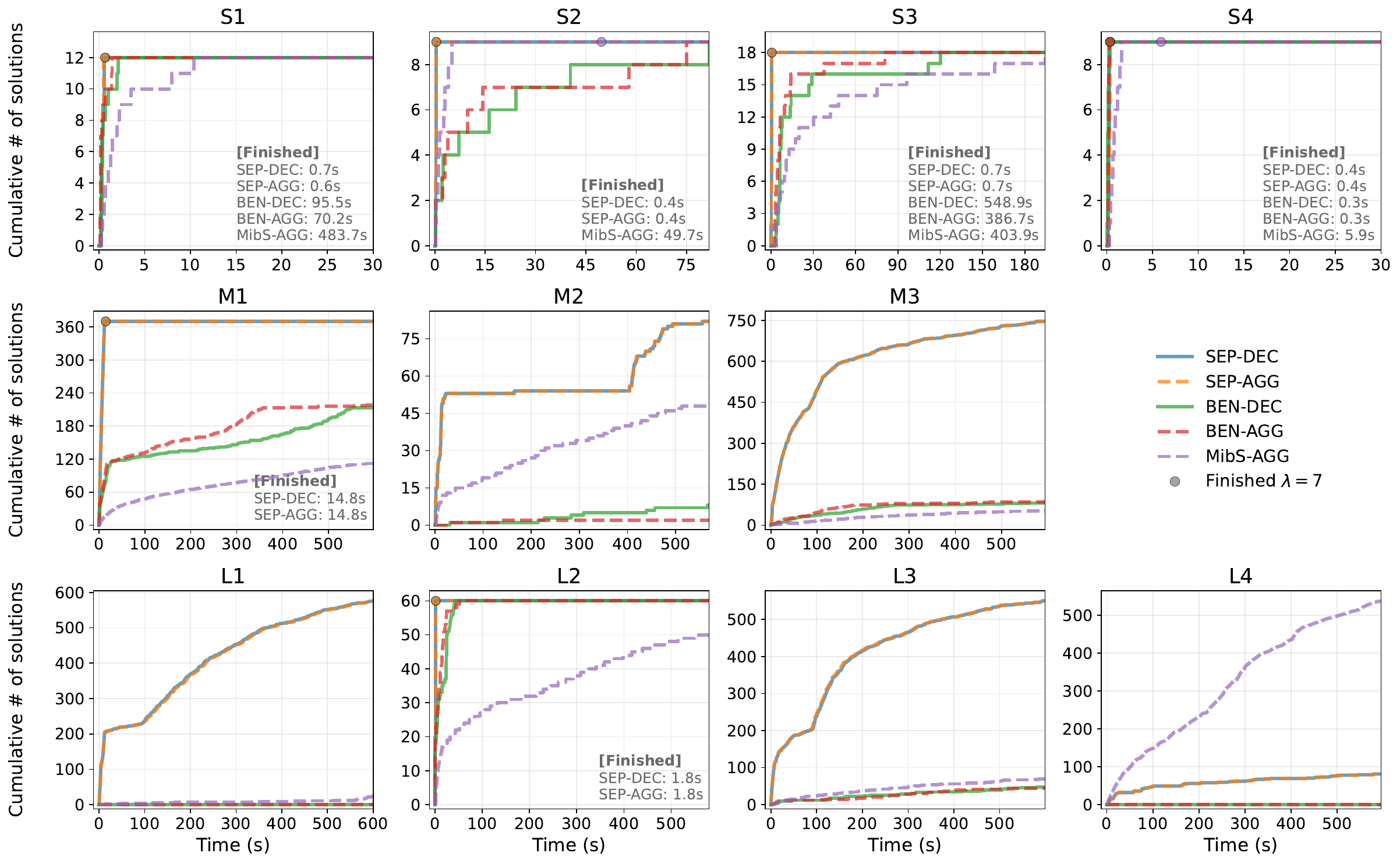}
\end{figure}

\begin{figure}[ht]
	\caption{Cumulative number of minimal controls over time for $\Tmax=3$}\label{fig:computation-time-t-max-3}
	\includegraphics[width=0.9\linewidth]{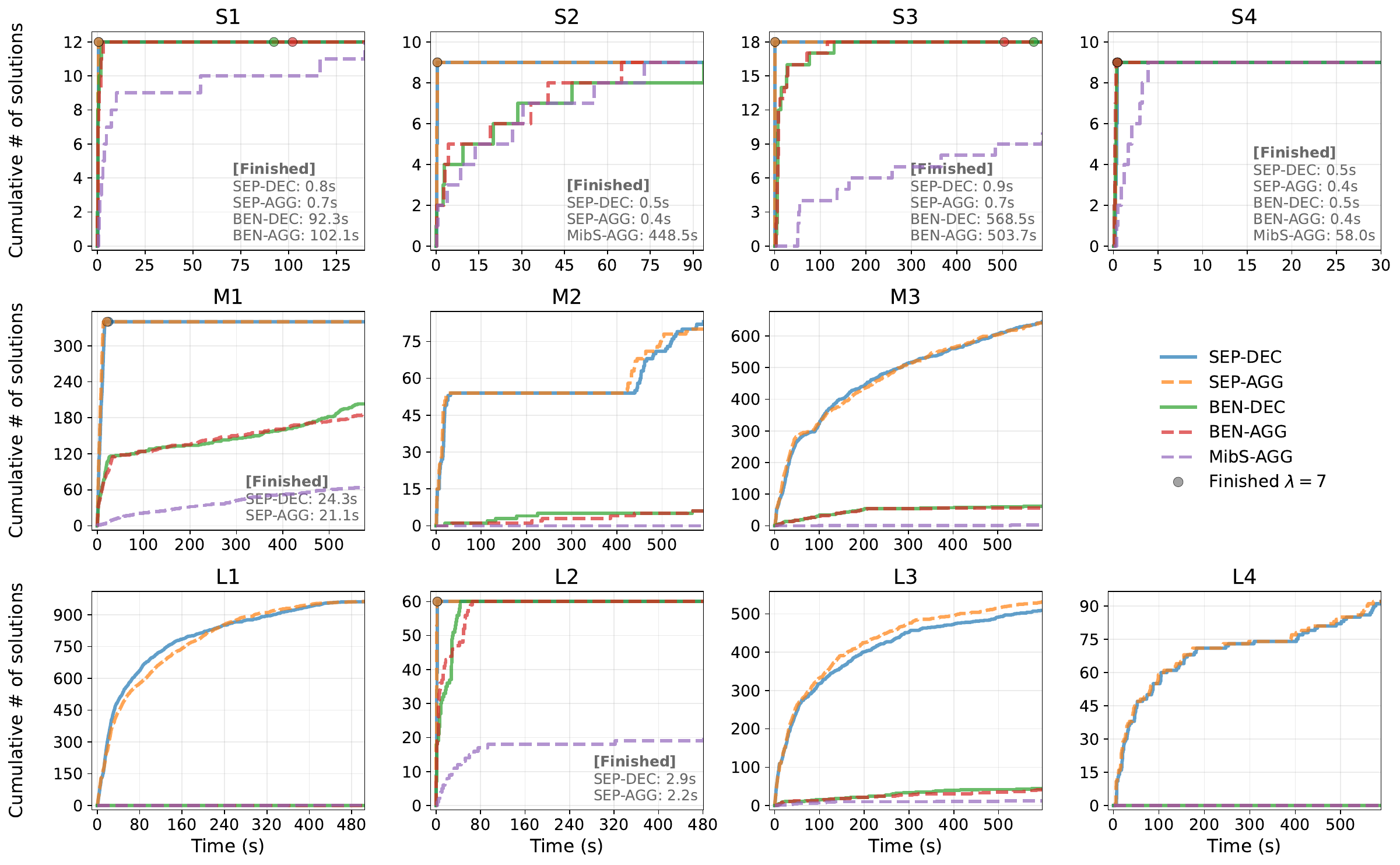}
\end{figure}

\begin{figure}[ht]
	\caption{Cumulative number of minimal controls over time for $\Tmax=15$}\label{fig:computation-time-t-max-15}
	\includegraphics[width=0.9\linewidth]{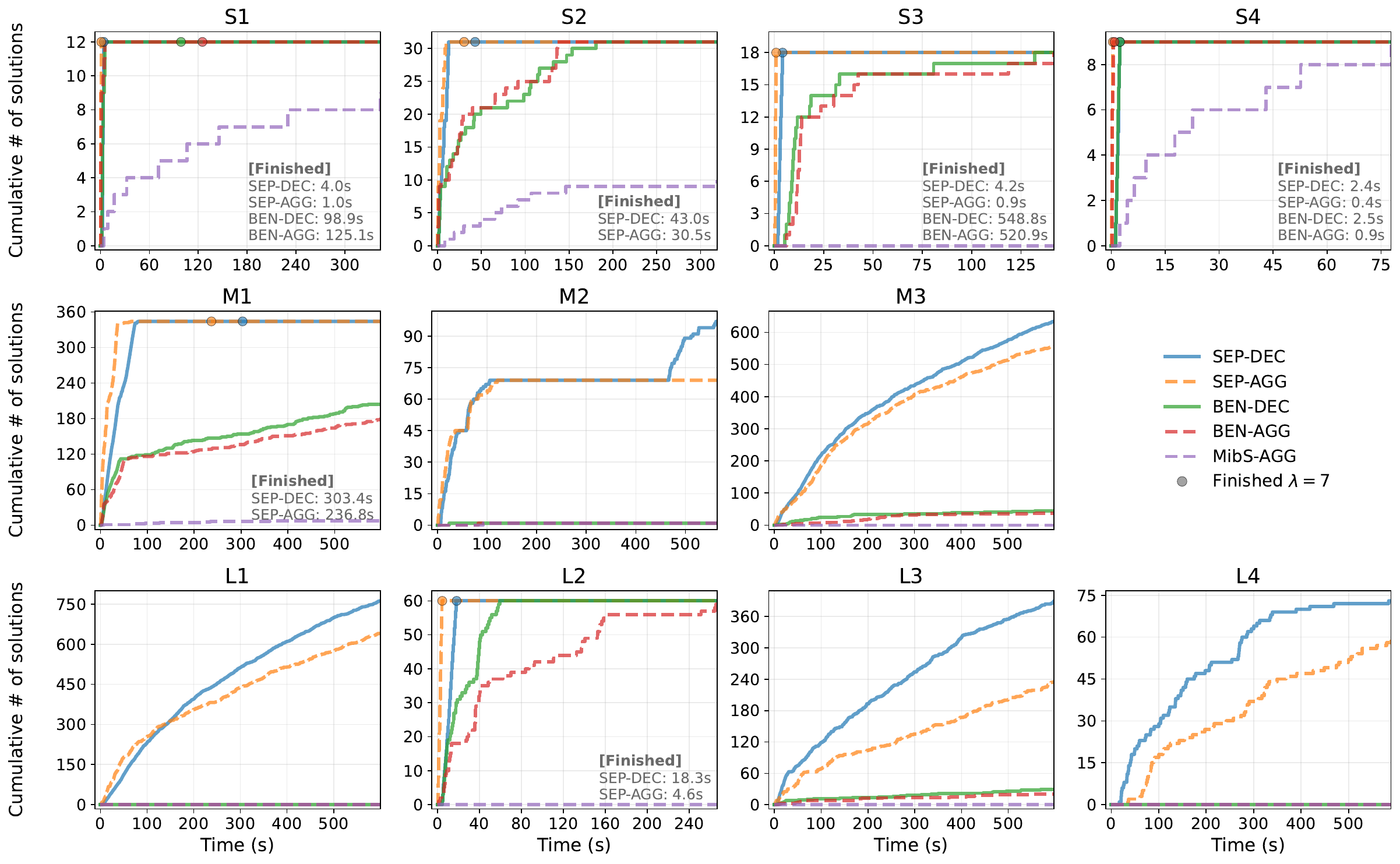}
\end{figure}

% \begin{figure}[ht]
% 	\caption{Cumulative number of minimal controls over time for $\Tmax=15$}\label{fig:computation-time-t-max-15}
% 	\includegraphics[width=0.9\linewidth]{agg_comptime_plot_T15.pdf}
% \end{figure}

\begin{figure}[ht]
	\caption{Cumulative number of minimal controls over time for $\Tmax=60$}\label{fig:computation-time-t-max-60}
	\includegraphics[width=0.9\linewidth]{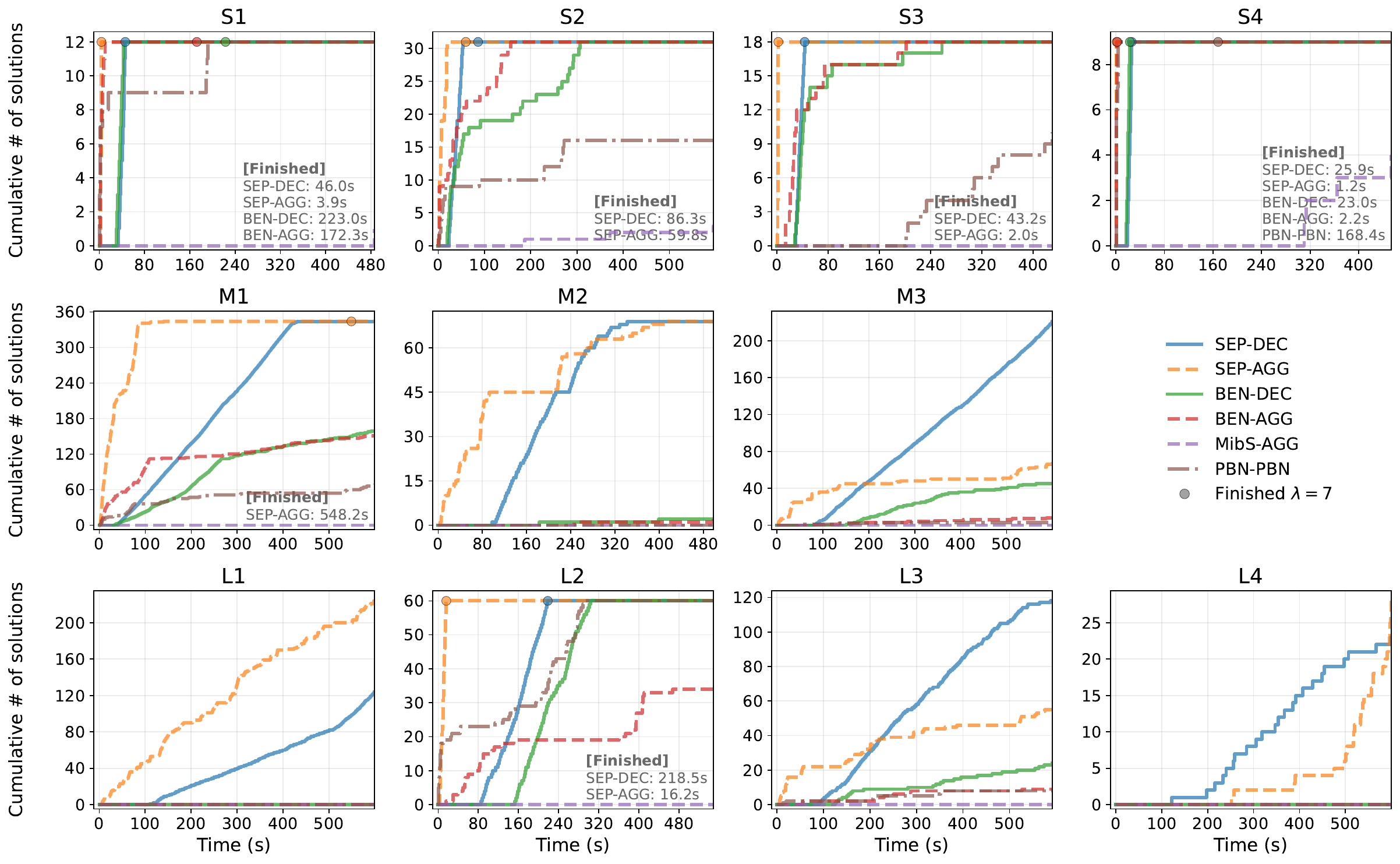}
\end{figure}

% \clearpage
% \subsubsection{The statistics of Benders cuts}\mbox{}

\begin{table}[ht]
\small
\centering
\TABLE{The statistics of the Benders cuts\label{tab:result-benders-statistics-merged}}{
\begin{tabular}{rrc rrrr rrr rrrr} \toprule
\newcutstrengthheader \\ \midrule
\multicolumn{14}{@{}l}{\textit{\textbf{$(\Tmax=1)$ Total number of cuts}}} \\[0.5em]
\BEN{} & DEC & \attrcut{} & 2126 & 1496 & 5198 & 16 & 3949 & 4563 & 2625 & 1345 & 3187 & 2919 & 2167 \\
& & No-good cut & - & 3547 & - & - & - & 197 & 1019 & 1016 & - & 253 & 384 \\
\SEP{} & DEC & \tscut{} & 18 & 5 & 11 & 3 & 148 & 119 & 219 & 144 & 18 & 254 & 133 \\
& & No-good cut & - & - & - & - & - & 2993 & 2793 & 1568 & - & 1834 & 1476 \\
\midrule
\multicolumn{14}{@{}l}{\textit{\textbf{$(\Tmax=1)$ Average number of binary variables in a cut}}} \\[0.5em]
\BEN{} & DEC & \attrcut{} & 19.0 & 15.0 & 14.0 & 11.0 & 25.0 & 26.0 & 31.0 & 56.0 & 65.0 & 50.0 & 73.0 \\
& & No-good cut & - & 30.0 & - & - & - & 52.0 & 62.0 & 112.0 & - & 100.0 & 146.0 \\
\SEP{} & DEC & \tscut{} & 10.9 & 4.6 & 3.4 & 8.0 & 13.6 & 7.6 & 12.9 & 17.9 & 31.3 & 16.7 & 14.9 \\
& & No-good cut & - & - & - & - & - & 52.0 & 62.0 & 112.0 & - & 100.0 & 146.0 \\
\midrule\midrule
\multicolumn{14}{@{}l}{\textit{\textbf{$(\Tmax=3)$ Total number of cuts}}} \\[0.5em]
\BEN{} & DEC & \attrcut{} & 2126 & 1481 & 5202 & 16 & 3820 & 4482 & 3115 & 1741 & 3407 & 3184 & 2270 \\
& & No-good cut & - & 3299 & - & - & - & 183 & 582 & 834 & - & 200 & 415 \\
\SEP{} & DEC & \tscut{} & 18 & 5 & 11 & 3 & 153 & 123 & 248 & 120 & 18 & 262 & 134 \\
& & \attrcut{} & - & - & - & - & 108 & 2 & 1168 & 34 & - & 82 & 4 \\
& & No-good cut & - & - & - & - & - & 2721 & 1277 & 1382 & - & 1783 & 1382 \\
\midrule
\multicolumn{14}{@{}l}{\textit{\textbf{$(\Tmax=3)$ Average number of binary variables in a cut}}} \\[0.5em]
\BEN{} & DEC & \attrcut{} & 19.0 & 15.0 & 14.0 & 11.0 & 25.0 & 26.0 & 31.3 & 56.1 & 65.0 & 50.1 & 73.0 \\
& & No-good cut & - & 30.0 & - & - & - & 52.0 & 62.0 & 112.0 & - & 100.0 & 146.0 \\
\SEP{} & DEC & \tscut{} & 10.9 & 4.6 & 3.4 & 8.0 & 13.0 & 7.6 & 13.1 & 16.6 & 31.3 & 16.7 & 14.8 \\
& & \attrcut{} & - & - & - & - & 35.8 & 31.0 & 38.5 & 60.1 & - & 64.9 & 79.0 \\
& & No-good cut & - & - & - & - & - & 52.0 & 62.0 & 112.0 & - & 100.0 & 146.0 \\
\midrule\midrule
\multicolumn{14}{@{}l}{\textit{\textbf{$(\Tmax=5)$ Total number of cuts}}} \\[0.5em]
\BEN{} & DEC & \attrcut{} & 2128 & 5098 & 5202 & 16 & 3867 & 4745 & 4017 & 3115 & 3284 & 3253 & 2486 \\
& & No-good cut & - & - & - & - & - & - & - & - & - & 1 & 54 \\
\SEP{} & DEC & \tscut{} & 18 & 19 & 11 & 3 & 163 & 137 & 269 & 129 & 18 & 316 & 138 \\
& & \attrcut{} & - & 728 & - & - & 1786 & 3057 & 2497 & 1827 & - & 1716 & 766 \\
& & No-good cut & - & - & - & - & - & 64 & 148 & - & - & 132 & 663 \\
\midrule
\multicolumn{14}{@{}l}{\textit{\textbf{$(\Tmax=5)$ Average number of binary variables in a cut}}} \\[0.5em]
\BEN{} & DEC & \attrcut{} & 19.0 & 15.6 & 14.0 & 11.0 & 25.1 & 26.2 & 31.8 & 56.8 & 65.0 & 50.6 & 74.7 \\
& & No-good cut & - & - & - & - & - & - & - & - & - & 100.0 & 146.0 \\
\SEP{} & DEC & \tscut{} & 10.9 & 4.1 & 3.4 & 8.0 & 13.3 & 8.0 & 13.2 & 17.6 & 31.3 & 16.9 & 14.6 \\
& & \attrcut{} & - & 18.3 & - & - & 35.7 & 34.3 & 39.3 & 67.3 & - & 67.3 & 99.7 \\
& & No-good cut & - & - & - & - & - & 52.0 & 62.0 & - & - & 100.0 & 146.0 \\
\midrule\midrule
\multicolumn{14}{@{}l}{\textit{\textbf{$(\Tmax=15)$ Total number of cuts}}} \\[0.5em]
\BEN{} & DEC & \attrcut{} & 2128 & 5117 & 5206 & 16 & 3817 & 4856 & 4021 & 3139 & 3282 & 3264 & 2426 \\
& & No-good cut & - & - & - & - & - & - & - & - & - & - & 2 \\
\SEP{} & DEC & \tscut{} & 18 & 19 & 11 & 3 & 163 & 140 & 255 & 137 & 18 & 277 & 118 \\
& & \attrcut{} & - & 728 & - & - & 1786 & 3180 & 2387 & 1630 & - & 1796 & 1160 \\
& & No-good cut & - & - & - & - & - & - & 17 & - & - & 2 & 20 \\
\midrule
\multicolumn{14}{@{}l}{\textit{\textbf{$(\Tmax=15)$ Average number of binary variables in a cut}}} \\[0.5em]
\BEN{} & DEC & \attrcut{} & 19.0 & 15.7 & 14.0 & 11.0 & 25.1 & 26.2 & 31.8 & 56.8 & 65.0 & 50.7 & 75.4 \\
& & No-good cut & - & - & - & - & - & - & - & - & - & - & 146.0 \\
\SEP{} & DEC & \tscut{} & 10.9 & 4.1 & 3.4 & 8.0 & 13.3 & 7.9 & 13.0 & 17.7 & 31.3 & 16.2 & 14.2 \\
& & \attrcut{} & - & 18.3 & - & - & 35.7 & 34.3 & 39.7 & 67.0 & - & 68.4 & 105.2 \\
& & No-good cut & - & - & - & - & - & - & 62.0 & - & - & 100.0 & 146.0 \\
\bottomrule
\end{tabular}}{}
\end{table}

\end{APPENDICES}

%%%%%%%%%%%%%%%%%
\end{document}